\numberwithin{theorem}{section}
\newtheorem{cclaim}[theorem]{Claim}
\newtheorem{ccorollary}[theorem]{Corollary}
\newtheorem{ddefinition}[theorem]{Definition}
\newtheorem{llemma}[theorem]{Lemma}
\newtheorem{oobservation}[theorem]{Observation}
\journalname{Graphs and Combinatorics}
\begin{document}

\title{Which graphs occur as $\gamma$-graphs?
\thanks{M.~Devos, A.~Dyck and J.~Jedwab are supported by NSERC.\\The results of this paper form part of the Master's thesis of A. Dyck \cite{dyck-masters}, who presented them in part at the CanaDAM 2017 conference in Toronto, ON.}
}

\author{Matt DeVos \and Adam Dyck \and Jonathan Jedwab \and Samuel Simon
}

\authorrunning{DeVos Dyck Jedwab Simon} 

\institute{M. DeVos, A. Dyck, J. Jedwab, S. Simon \at
Department of Mathematics,
Simon Fraser University, 8888 University Drive, Burnaby BC V5A 1S6, Canada \\
              \email{mdevos@sfu.ca, ard9@sfu.ca, jed@sfu.ca, ssimon@sfu.ca} %
}

\date{Received: date / Accepted: date}
% The correct dates will be entered by the editor

\maketitle

\begin{abstract}
The $\gamma$-graph of a graph $G$ is the graph whose vertices are labelled by the minimum dominating sets of $G$, in which two vertices are adjacent when their corresponding minimum dominating sets (each of size $\gamma(G)$) intersect in a set of size $\gamma(G)-1$.
We extend the notion of a $\gamma$-graph from distance\-/1\-/domination to distance\-/$d$\-/domination, and ask which graphs $H$ occur as $\gamma$-graphs for a given value of~$d \ge 1$.
We show that, for all $d$, the answer depends only on whether the vertices of $H$ admit a labelling consistent with the adjacency condition for a conventional $\gamma$-graph.
This result relies on an explicit construction for a graph having an arbitrary prescribed set of minimum distance\-/$d$\-/dominating sets.
We then completely determine the graphs that admit such a labelling among the wheel graphs, the fan graphs, and the graphs on at most six vertices.
We connect the question of whether a graph admits such a labelling with previous work on induced subgraphs of Johnson graphs.

\keywords{Gamma graph \and Graph domination \and Minimum dominating set \and Graph labelling}
\end{abstract}

\usetikzlibrary{shadows}

%Node styles
\tikzstyle{n} = [shape=circle, minimum size=0.23cm, text=black, draw=black!150, bottom color=black!150, text width=0.0001cm, inner sep=0pt]
\tikzstyle{nwhite} = [shape=circle, minimum size=0.3cm, text=black, draw=black!150, top color=white, bottom color=white, text width=0.0001cm, inner sep=0pt]
% %

%Edge style
\tikzstyle{e} = [thick]

% % % % % % % % % % % % % % % % % % % % % 

\showboxdepth=\maxdimen
\showboxbreadth=\maxdimen

\section{Introduction} 
\label{sec:intro}

In this paper we consider only finite, loop-free, undirected graphs $G$ without multiple edges.
Our main object of study is the $\gamma_d$-graph of a graph $G$, which we introduce via the following three definitions.

\begin{ddefinition} \label{def:distddom}
Let $G$ be a graph, and let $S$ and $T$ be subsets of the vertex set $V(G)$ of~$G$.
The set $S$ \emph{distance\-/$d$\-/dominates} $T$ if every vertex of $T$ is within distance $d$ in $G$ of some vertex in~$S$. In the case $T = V(G)$, the subset $S$ is a \emph{distance\-/$d$\-/dominating set} of $G$.
\end{ddefinition}

\begin{ddefinition} \label{def:distdgammaset}
A \emph{minimum distance\-/$d$\-/dominating set} of a graph $G$ is a distance\-/$d$\-/dominating set of smallest size, and this size is the \emph{distance\-/$d$\-/domination number} $\gamma_d(G)$ of~$G$.
\end{ddefinition}

These definitions reduce to well-studied domination notions when $d=1$:
a distance\-/1\-/dominating set is a dominating set;
a minimum distance\-/1\-/dominating set is a minimum dominating set;
and the distance\-/1\-/domination number $\gamma_1(G)$ is the domination number~$\gamma(G)$. 
The study of domination in graphs spans more than fifty years, with early interpretations that include the number of queens required to access every square of a chessboard \cite{ore}, the strength of surveillance in a network \cite{berge}, and network communications \cite{liu}. The modern study of domination has connections to game theory, coding theory, and matching theory; see \cite{fundamentals} and~\cite{fundamentalsadvanced} for extensive background.
The extension of domination notions to the cases $d>1$ in Definitions~\ref{def:distddom} and~\ref{def:distdgammaset} follows \cite{distancedom} and~\cite{henning}, for example. 

\begin{ddefinition} \label{def:gammaddotg}
The \emph{$\gamma_d$-graph} $\gamma_d \cdot G$ of a graph $G$ has vertices labelled by the minimum distance\-/$d$\-/dominating sets of $G$, and an edge joining two vertices if and only if their corresponding labels intersect in a set of size $\gamma_d(G) - 1$.
\end{ddefinition}

The case $d=1$ of Definition~\ref{def:gammaddotg} corresponds to the $\gamma$-graph $\gamma \cdot G$, introduced by Subramanian and Sridharan~\cite{ssgamma} and subsequently studied in~\cite{sstrees},~\cite{lakshmanan},~\cite{sarinduce},~\cite{bien}. 
We believe that the generalisation of the $\gamma$-graph in Definition~\ref{def:gammaddotg} to cases $d>1$ is new.
(An alternative definition of a $\gamma$-graph, written $G(\gamma)$ and studied in \cite{connelly}, \cite{fricke}, \cite{edwards}, \cite{teshima}, imposes an additional restriction on the edges of the $\gamma$-graph; we do not consider that definition in this paper.) 
See Figure~\ref{fig:gamma1gamma2} for an example of a graph $G$ and its $\gamma_1$-graph and~$\gamma_2$-graph.

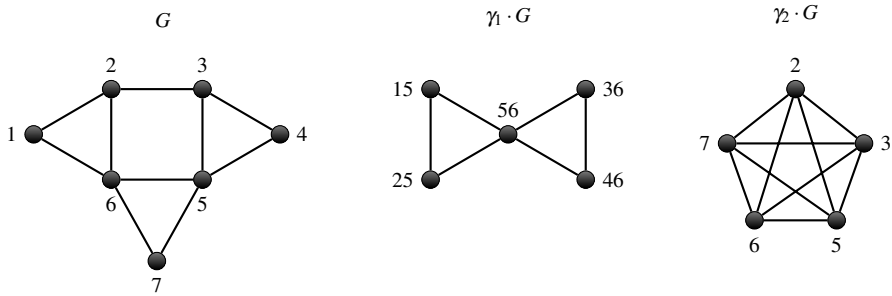
\begin{figure}
\begin{center}
\begin{tikzpicture}[scale=0.6]
\node [n, label=left: {$1$}, label={[xshift=1.7cm, yshift=1.2cm]$G$}] (v1) at (0.3,0) {};
\node [n, label=above: {$2$}] (v2) at (2,1) {};
\node [n, label=above: {$3$}] (v3) at (4,1) {};
\node [n, label=right: {$4$}] (v4) at (5.7,0) {};
\node [n, label=below: {$5$}] (v5) at (4,-1) {};
\node [n, label=below: {$6$}] (v6) at (2,-1) {};
\node [n, label=below: {$7$}] (v7) at (3,-2.8) {};
\draw [e] (v1) -- (v2) -- (v3) -- (v4) -- (v5) -- (v6) -- (v1);
\draw [e] (v6) -- (v7) -- (v5);
\draw [e] (v2) -- (v6);
\draw [e] (v3) -- (v5);

\node [n, label=left: {$15$}] (v15) at (9,1) {};
\node [n, label=left: {$25$}] (v25) at (9,-1) {};
\node [n, label=above: {$56$}, label={[xshift=0cm, yshift=1.2cm]$\gamma_1 \cdot G$}] (v56) at (10.7,0) {};
\node [n, label=right: {$36$}] (v36) at (12.4,1) {};
\node [n, label=right: {$46$}] (v46) at (12.4,-1) {};
\draw [e] (v56) -- (v25) -- (v15) -- (v56) -- (v36) -- (v46) -- (v56);

\node [n, label=above: {$2$}, label={[xshift=0cm, yshift=0.65cm]$\gamma_2 \cdot G$}] (v22) at (17,1) {};
\node [n, label=right: {$3$}] (v32) at (18.5,-0.2) {};
\node [n, label=below: {$5$}] (v52) at (17.9,-1.9) {};
\node [n, label=below: {$6$}] (v62) at (16.1,-1.9) {};
\node [n, label=left: {$7$}] (v72) at (15.5,-0.2) {};
\draw [e] (v22) -- (v32) -- (v52) -- (v62) -- (v72) -- (v22) -- (v52) -- (v72) -- (v32) -- (v62) -- (v22);
\end{tikzpicture}
\caption{A graph $G$, its $\gamma_1$-graph, and its $\gamma_2$-graph.} 
\label{fig:gamma1gamma2}
\end{center}
\end{figure}

We say that a graph $H$ is \emph{$d$-realisable} if there exists a graph $G$ for which $H = \gamma_d \cdot G$; otherwise $H$ is \emph{$d$-unrealisable}. 
A graph is \emph{minimally $d$-unrealisable} if it is $d$-unrealisable but every proper induced subgraph is $d$-realisable.
See Figure~\ref{fig:2-unrealisable} for an example of a 2-unrealisable and minimally 2-unrealisable graph.
The central objective is:
\begin{align}
& \mbox{Determine, for given $d$, which graphs $H$ are $d$-realisable and which} \nonumber \\[-0.5ex]
& \mbox{are minimally $d$-unrealisable}.  \label{central-obj}
\end{align}

\begin{figure}
\begin{center}
\begin{tikzpicture}[scale=0.9]
\node [n, label=below: {$ $}] (v1) at (0,0) {};
\node [n, label=left: {$ $}] (v2) at (-1.5,1.5) {};
\node [n, label=right: {$ $}] (v3) at (0,1.5) {};
\node [n, label=below: {$ $}] (v4) at (1.5,1.5) {};
\node [n, label={[yshift=0.5cm]$G$}] (v5) at (0,3) {};
\node [n, label=above: {$ $}] (v6) at (3,1.5) {};
\draw [e] (v6) -- (v4) -- (v1) -- (v2) -- (v5) -- (v4);
\draw [e] (v2) -- (v3) -- (v4);

\node [n, label=below: {$ $}] (v11) at (7,0) {};
\node [n, label=left: {$ $}] (v21) at (5.5,1.5) {};
\node [n, label=right: {$ $}] (v31) at (7,1.5) {};
\node [n, label=right: {$ $}] (v41) at (8.5,1.5) {};
\node [n, label={[yshift=0.5cm]$K_{2,3}$}] (v51) at (7,3) {};
\draw [e] (v21) -- (v51) -- (v41) -- (v11) -- (v21) -- (v31) -- (v41);
\end{tikzpicture}
\caption{The graph $G$ is 2-unrealisable, and it contains the induced subgraph $K_{2,3}$ which is minimally 2-unrealisable (as established in Corollary~\ref{cor:realisegammaddotg} and Theorem~\ref{thm:minunlab5}).}
\label{fig:2-unrealisable}
\end{center}
\end{figure}
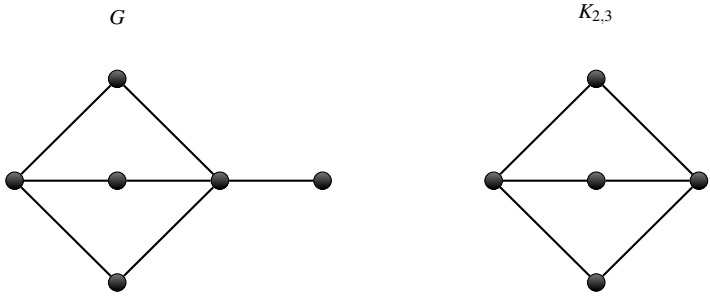

We say that a graph $H$ is \emph{labellable} if, for some positive integer $k$, the vertices of $H$ can be labelled by distinct $k$-subsets of $\{1,2,3,\dots\}$ such that two vertices are adjacent if and only if their corresponding labels intersect in a set of size $k - 1$; otherwise $H$ is \emph{unlabellable}.
Given a labellable graph $H$, neither its labelling nor the associated integer $k$ are unique: adding a new symbol to each of the vertex labels increases $k$ by one. The following observation is immediate.
\begin{oobservation}
\label{obs:induced}
Each induced subgraph of a labellable graph is labellable.
\end{oobservation}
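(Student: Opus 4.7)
The plan is to prove this directly from the definition by restricting an existing labelling. Suppose $H$ is labellable, so that for some positive integer $k$ there is an assignment $v \mapsto L(v)$ of distinct $k$-subsets of $\{1,2,3,\dots\}$ to the vertices of $H$ satisfying the biconditional: $uv \in E(H)$ iff $|L(u) \cap L(v)| = k-1$. Given an induced subgraph $H'$ of $H$ with vertex set $V' \subseteq V(H)$, I would propose to label each $v \in V'$ with the same set $L(v)$ that is used in $H$.

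Two things then need verification: the labels on $V'$ remain distinct $k$-subsets (which is immediate, since they were distinct on the larger set $V(H)$), and the labelling adjacency condition holds in $H'$ with the same parameter $k$. For the latter, take any pair $u,v \in V'$; because $H'$ is an \emph{induced} subgraph, $uv \in E(H')$ iff $uv \in E(H)$, and the original labelling property gives $uv \in E(H)$ iff $|L(u) \cap L(v)| = k-1$. Composing these biconditionals shows that the restricted labelling witnesses that $H'$ is labellable. There is essentially no obstacle in this argument; its whole content is the observation that the ``induced'' hypothesis preserves adjacency in both directions, which is exactly what the labellability biconditional demands.
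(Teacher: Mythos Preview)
Your argument is correct and is exactly the routine verification the paper has in mind when it calls the observation ``immediate'': restrict the given labelling to the vertex set of the induced subgraph and use that induced subgraphs preserve adjacency in both directions. There is nothing to add.
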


A graph that is $d$-realisable for some positive integer $d$ is necessarily labellable. Our main result (Corollary~\ref{cor:realisegammaddotg} below) is that the converse holds for every~$d$, which we prove using the following theorem. We consider it very surprising that there is such a simple characterisation of when a graph is $d$-realisable. 

\begin{theorem} \label{thm:realisegammaddotg}
Let $k$ and $d$ be positive integers, and let $\mathcal{D}$ be a nonempty set of $k$-subsets of $\{1,2,3,\dots\}$. Then there is a graph $G$ whose minimum distance\-/$d$\-/dominating sets are the elements of~$\mathcal{D}$.
\end{theorem}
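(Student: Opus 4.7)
I would construct $G$ directly from $\mathcal{D}$. Enlarging $\bigcup_{D \in \mathcal{D}} D$ if necessary, take a ground set $U$ of size $n \geq k + |\mathcal{D}|$. For each subset $T \subseteq U$ with $|T| \leq k$ and $T \notin \mathcal{D}$, introduce an auxiliary \emph{witness} vertex $w_T$. The graph $G$ has vertex set $U$ together with these witnesses and the internal vertices of paths described next; its edges are (a) $U$ induces a clique, and (b) for each witness $w_T$ and each $u \in U \setminus T$ there is a length-$d$ path from $w_T$ to $u$, with all such paths internally vertex-disjoint from one another and from $U$. (When $d=1$ the paths are edges and no internal vertices arise.)

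\textbf{Key steps.} First I would check that every $D \in \mathcal{D}$ is a distance-$d$-dominating set of $G$: vertices in $U$ lie at distance $1$ from $D$ via the clique, internal path vertices are within distance $d$ of their $U$-endpoint, and each witness $w_T$ is reached because $|D| = k \geq |T|$ together with $D \neq T$ forces $D \cap (U \setminus T) \neq \emptyset$, furnishing a $U$-endpoint of a length-$d$ path to $w_T$. Then I would show that no $D^* \subseteq V(G)$ with $|D^*| \leq k$ other than an element of $\mathcal{D}$ is distance-$d$-dominating. The pivotal local computation is that distinct witnesses are at pairwise distance at least $2d$, and each internal path vertex is at distance greater than $d$ from every witness other than the one indexing its path; so each non-$U$ vertex of $D^*$ is within distance $d$ of at most one witness. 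Now for any $T$ satisfying $T \supseteq D^* \cap U$, $|T| \leq k$, and $T \notin \mathcal{D}$, no $U$-vertex of $D^*$ lies in $U \setminus T$, hence $D^*$ can dominate $w_T$ only through a non-$U$ vertex indexing $w_T$. There are at most $|D^* \setminus U| \leq k - |D^* \cap U|$ such vertices available, but taking $T = D^* \cap U$ or $T = (D^* \cap U) \cup \{u\}$ for $u \notin D^* \cap U$ yields at least $1 + (n - |D^* \cap U|) - |\mathcal{D}|$ candidate $T$'s. The size condition $n \geq k + |\mathcal{D}|$ forces this count to strictly exceed $k - |D^* \cap U|$, leaving some witness undominated and contradicting the assumption on $D^*$.

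\textbf{Main obstacle.} The delicate step is the second one, because a candidate $D^*$ may freely mix $U$-vertices with auxiliary vertices, and the argument must exclude every such mixture. The obstacle is resolved by the local distance computations that cap each auxiliary vertex's usefulness at a single witness, combined with the fact that enlarging $U$ produces more witnesses indexed by supersets of $D^* \cap U$ than $D^*$ has auxiliary vertices to spend. Everything else reduces to routine bookkeeping once these distance estimates and the lower bound on $n$ are in hand.
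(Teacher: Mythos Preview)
Your construction is correct and genuinely different from the paper's. The paper proceeds via the \emph{blocker}: letting $\mathcal{B}$ be the set of minimal subsets of $[n]$ meeting every $D\in\mathcal{D}$, it starts from $K_n$ and for each $B\in\mathcal{B}$ attaches two pendant length-$(d-1)$ paths whose endpoints are joined to every vertex of~$B$. The proof that the minimum distance-$d$-dominating sets of this graph are exactly $\mathcal{D}$ then reduces to the Edmonds--Fulkerson identity $b(b(\mathcal{D}))=\mathcal{D}$. Your approach instead installs a witness $w_T$ for every small ``forbidden'' set $T$ and connects it to $U\setminus T$ by disjoint length-$d$ paths; the exclusion of non-$\mathcal{D}$ candidates is handled by the counting argument comparing the number of relevant witnesses to $|D^*\setminus U|$, together with the distance estimates that cap each auxiliary vertex's reach at a single witness. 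What the paper's route buys is economy and cleanliness (only $|\mathcal{B}|$ gadgets, and the short blocker-of-blocker step in place of your counting); what your route buys is self-containment---no external clutter theory is invoked---and a more transparent obstruction for each illegitimate $D^*$. Your construction is in fact closer in spirit to the Honkala--Hudry--Lobstein construction the paper cites (and improves upon in size).

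One small point to tidy: when $|D^*\cap U|=k$ the sets $(D^*\cap U)\cup\{u\}$ have size $k+1$ and so carry no witness, so your stated lower bound on the number of candidate $T$'s overcounts in that boundary case. This is harmless, since then $D^*\setminus U=\emptyset$ and the single choice $T=D^*\cap U$ already yields an undominated witness; but you should flag the case split explicitly.
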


\begin{ccorollary} 
\label{cor:realisegammaddotg}
\mbox{}
\begin{enumerate}[$(i)$]
\item
A graph $H$ is $d$-realisable for every positive integer $d$ if and only if it is labellable.

\item
A graph $H$ is $d$-unrealisable for every positive integer $d$ if and only if it is unlabellable. 

\item
A graph $H$ is minimally $d$-unrealisable for every positive integer $d$ if and only if it is unlabellable but every proper induced subgraph is labellable.
\end{enumerate}

\end{ccorollary}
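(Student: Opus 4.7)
The plan is to derive all three parts of the corollary directly from Theorem~\ref{thm:realisegammaddotg} together with the definition of $\gamma_d \cdot G$; no further graph-theoretic work is required, and the main obstacle (the explicit construction) has already been absorbed into Theorem~\ref{thm:realisegammaddotg}.

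For part $(i)$, I would handle the two directions as follows. For the forward direction it actually suffices to assume $H$ is $d$-realisable for a single value of $d$: if $H = \gamma_d \cdot G$, then the minimum distance\-/$d$\-/dominating sets of $G$ are all $\gamma_d(G)$-subsets of $V(G)$, and by Definition~\ref{def:gammaddotg} their intersection pattern matches the adjacencies of $H$ exactly, so after identifying $V(G)$ with an initial segment of $\{1,2,3,\dots\}$ this is precisely a labelling of $H$ in the sense of the paper. For the converse, given a labelling of $H$ by distinct $k$-subsets of $\{1,2,3,\dots\}$, let $\mathcal{D}$ be the family of labels and let $d$ be any prescribed positive integer. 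Applying Theorem~\ref{thm:realisegammaddotg} with these data produces a graph $G$ whose minimum distance\-/$d$\-/dominating sets are exactly the elements of $\mathcal{D}$. The $\gamma_d$-graph $\gamma_d \cdot G$ then has the same labelled vertex set as $H$, and its edges are determined by the same intersection condition as the labelling, so $\gamma_d \cdot G = H$ and $H$ is $d$-realisable.

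Part $(ii)$ is an immediate consequence of what has just been established. The argument for the forward direction of $(i)$ in fact proves the stronger statement that $H$ being $d$-realisable for \emph{some} $d$ already forces $H$ to be labellable. Combined with the converse direction of $(i)$, this yields the chain
\[
\text{$H$ is $d$-realisable for every $d$} \iff \text{$H$ is labellable} \iff \text{$H$ is $d$-realisable for some $d$},
\]
and negating the outer pair gives exactly $(ii)$.

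Part $(iii)$ is then a quantifier manipulation. Unfolding the definition of minimally $d$-unrealisable, the statement that $H$ is minimally $d$-unrealisable for every positive integer $d$ splits into two conjuncts: that $H$ is $d$-unrealisable for every $d$, and that for every $d$ every proper induced subgraph of $H$ is $d$-realisable. The first conjunct is equivalent to $H$ being unlabellable by $(ii)$. For the second conjunct I would interchange the two universal quantifiers and apply $(i)$ to each proper induced subgraph separately, obtaining that every proper induced subgraph of $H$ is labellable. Combining these two equivalences yields $(iii)$. The only genuine difficulty anywhere in the argument lies in Theorem~\ref{thm:realisegammaddotg}, which is used as a black box.
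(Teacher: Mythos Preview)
Your proposal is correct and follows essentially the same approach as the paper: both prove that for each fixed $d$ a graph $H$ is $d$-realisable if and only if it is labellable, with the forward direction reading off a labelling from the minimum distance-$d$-dominating sets and the converse invoking Theorem~\ref{thm:realisegammaddotg}. You spell out the logical derivation of $(ii)$ and $(iii)$ from this equivalence more explicitly than the paper does, but the substance is identical.
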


\begin{proof}
Let $d$ be a positive integer. We shall show that a graph $H$ is $d$-realisable if and only if it is labellable, which implies each of $(i)$, $(ii)$, and $(iii)$.

If $H$ is $d$-realisable with respect to a graph $G$ then it is labellable using the minimum distance\-/$d$\-/dominating sets of $G$.
Conversely, suppose that $H$ is labellable. Then for some positive integer $k$ the vertices of $H$ can be labelled by distinct $k$-subsets of $\{1,2,3\dots\}$ such that two vertices are adjacent if and only if their corresponding labels intersect in a set of size $k-1$. Then by Theorem~\ref{thm:realisegammaddotg} there is a graph $G$ whose minimum distance\-/$d$\-/dominating sets are these $k$-subsets. Therefore $H = \gamma_d \cdot G$ and so $H$ is $d$-realisable.
\qed
\end{proof}

\noindent In view of Observation~\ref{obs:induced} and Corollary~\ref{cor:realisegammaddotg}, we shall say that a graph $H$ is \emph{minimally unlabellable} if it is unlabellable but every proper induced subgraph is labellable. This allows us to rephrase the central objective \eqref{central-obj} as:
\begin{equation}
\label{central-obj2}
\mbox{Determine which graphs are labellable and which are minimally unlabellable}.
\end{equation}
Using the crucial insight that \eqref{central-obj} is equivalent to \eqref{central-obj2}, we shall simplify, unify, and extend many results that were previously stated and proved (often only with considerable effort) in terms of $\gamma_1$-graphs.

We now describe some relationships with previous work that uses different terminology. 

\begin{ddefinition}
\label{def:Johnson}
For positive integers $k$ and $n$ satisfying $k \le n$, the \emph{Johnson graph} $J(n,k)$ has vertices labelled by the $k$-subsets of $\{1,2,\dots,n\}$, and an edge joining two vertices if and only if their corresponding labels intersect in a set of size~$k-1$.
\end{ddefinition}
\noindent
Johnson graphs are well-studied as distance-regular graphs~\cite{brouwercohenneumaier}, in quantum probability~\cite{horaobata}, and in spectral analysis~\cite{krebsshaheen}; see~\cite{handbook} for further background.
It follows from Definition~\ref{def:Johnson} that
a graph $G$ is labellable if and only if it is (isomorphic to) an induced subgraph of a Johnson graph.
The results of \cite{naimishaw},~\cite{malikali}, and~\cite{malikalidelta}, concerning which graphs occur as an induced subgraph of a Johnson graph, can therefore be equivalently phrased as results on which graphs are labellable.
In this paper we extend many of these previous results.

The special case $d=1$ of Theorem~\ref{thm:realisegammaddotg} was established by Honkala, Hudry and Lobstein~\cite[Theorem~2]{hhldom} using the language of optimal dominating codes in graphs; we shall show that in this special case our construction proving Theorem~\ref{thm:realisegammaddotg} is simpler and, in some cases, much more economical.
Although these authors developed certain generalisations of the case $d=1$ of Theorem~\ref{thm:realisegammaddotg} in a later paper~\cite{hhltwinfree}, to our knowledge the cases $d>1$ of Theorem~\ref{thm:realisegammaddotg} (and therefore the cases $d>1$ of Corollary~\ref{cor:realisegammaddotg}) are new. 
Honkala, Hudry and Lobstein interpreted their result on optimal dominating codes \cite[Theorem~2]{hhldom} in terms of induced subgraphs of Johnson graphs, and cited results of \cite{naimishaw} on these graphs. They also defined a graph $\mathcal{N}(G)$ which is identical to $\gamma\cdot G$, but did not explicitly mention $\gamma$-graphs nor cite publications phrased in terms of $\gamma$-graphs.

We now outline the rest of the paper.
In Section~\ref{sec:proofmainthm} we give a constructive proof of Theorem~\ref{thm:realisegammaddotg}. 
In Section~\ref{sec:previousresults} we summarise previous proven and claimed results on which graphs are labellable and which are (minimally) unlabellable, and present counterexamples that disprove two of these claimed results.
In Section~\ref{sec:fll} we derive a series of lemmas that constrain the form of the labelling of an induced subgraph of a labellable graph~$G$, for use in subsequent sections.
In Section~\ref{sec:wheel-fan} we determine precisely which wheel graphs and which fan graphs are labellable, exhibiting an infinite family of minimally unlabellable graphs. 
In Section~\ref{sec:minunlab5} and Appendix~A we verify the previously known result that there are exactly four minimally unlabellable graphs on at most five vertices.
In Section~\ref{sec:minunlab6} and Appendix~B we prove that there are exactly four minimally unlabellable graphs on six vertices.
In Section~\ref{sec:claim7} we establish that a specific graph on seven vertices is minimally unlabellable.
We conclude in Section~\ref{sec:conc}.

\section{Proof of Theorem~\ref{thm:realisegammaddotg}}
\label{sec:proofmainthm}

In this section we give a constructive proof of Theorem~\ref{thm:realisegammaddotg}. 
The construction is illustrated in Figure~\ref{fig:gammaddotg}. 
We require the following concepts.

\begin{ddefinition}
A \emph{clutter} of a finite set $E$ is a collection $\mathcal{C}$ of subsets of $E$ for which no element of $\mathcal{C}$ contains another. The \emph{blocker} $b(\mathcal{C})$ of a clutter $\mathcal{C}$ is the collection of all minimal subsets of $E$ having nonempty intersection with each $C \in \mathcal{C}$. 
\end{ddefinition}

\begin{theorem}[{\cite[p.~301]{edmonds-fulkerson}}] 
\label{thm:blocker-blocker}
Let $\mathcal{C}$ be a clutter. Then $b(b(\mathcal{C})) = \mathcal{C}$.
\end{theorem}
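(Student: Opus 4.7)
The plan is to establish $\mathcal{C} = b(b(\mathcal{C}))$ by showing that each family is contained in the other, exploiting the fact that both $\mathcal{C}$ and $b(b(\mathcal{C}))$ are clutters (antichains under inclusion). The cleanest route is to prove two ``mutual containment at the set-of-elements level'' claims:
\begin{enumerate}[(a)]
\item every $C \in \mathcal{C}$ contains some $B' \in b(b(\mathcal{C}))$, and
\item every $B' \in b(b(\mathcal{C}))$ contains some $C \in \mathcal{C}$.
\end{enumerate}
Once both (a) and (b) are in hand, the clutter/antichain property of each family forces all these containments to be equalities, which gives both $\mathcal{C} \subseteq b(b(\mathcal{C}))$ and $b(b(\mathcal{C})) \subseteq \mathcal{C}$.

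Claim (a) is essentially a direct unwinding of the definitions. If $C \in \mathcal{C}$, then $C$ meets every member of $b(\mathcal{C})$ (by definition of the blocker), so $C$ contains at least one \emph{minimal} subset having this property; such a minimal subset is, by definition, a member of $b(b(\mathcal{C}))$. Claim (b) is the step I expect to be the main obstacle, because it requires producing an element of $b(\mathcal{C})$ that reveals non\-/minimality. The idea is to argue by contradiction and use complements: if some $B' \in b(b(\mathcal{C}))$ contained no $C \in \mathcal{C}$, then for every $C \in \mathcal{C}$ we would have $C \setminus B' \neq \emptyset$, so the set $E \setminus B'$ would meet every $C \in \mathcal{C}$. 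Hence $E \setminus B'$ would contain some minimal transversal $B \in b(\mathcal{C})$. But $B \subseteq E \setminus B'$ means $B \cap B' = \emptyset$, contradicting the fact that $B' \in b(b(\mathcal{C}))$ must meet every element of $b(\mathcal{C})$.

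With (a) and (b) proved, the conclusion is a short composition argument. Given $C \in \mathcal{C}$, (a) produces $B' \in b(b(\mathcal{C}))$ with $B' \subseteq C$, and applying (b) to this $B'$ gives some $C'' \in \mathcal{C}$ with $C'' \subseteq B' \subseteq C$. Since $\mathcal{C}$ is a clutter we get $C'' = C$, hence $B' = C$, so $C \in b(b(\mathcal{C}))$. The symmetric composition, using that $b(b(\mathcal{C}))$ is itself a clutter (blockers are by construction antichains of minimal sets), shows every $B' \in b(b(\mathcal{C}))$ lies in $\mathcal{C}$. The only delicate ingredient is the complement trick in (b); everything else is bookkeeping driven by minimality.
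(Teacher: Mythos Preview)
Your argument is correct and is the standard proof of this classical duality. Note, however, that the paper does not supply its own proof: the theorem is quoted from Edmonds and Fulkerson and invoked as a black box in the construction of Section~\ref{sec:proofmainthm}, so there is nothing in the paper to compare against. Your two claims (a) and (b), combined with the antichain property of clutters, constitute exactly the textbook argument; the complement trick in (b) is indeed the only nontrivial step, and you have handled it correctly (finiteness of $E$ guarantees that any transversal contains a minimal one, so the passage from ``$E\setminus B'$ meets every $C$'' to ``some $B\in b(\mathcal{C})$ lies inside $E\setminus B'$'' is justified).
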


\paragraph{Proof of Theorem~\ref{thm:realisegammaddotg}.}

Take $n = \Big|\bigcup\limits_{D \in \mathcal{D}} D\Big|$, and relabel if necessary so that each element of $\mathcal{D}$ is a subset of~$[n]:= \{1,2,\dots,n\}$. 
The set $\mathcal{D}$ is a clutter of~$[n]$.
Let $\mathcal{B}$ be the blocker of $\mathcal{D}$, namely the collection of all minimal subsets of $[n]$ having nonempty intersection with each $D \in \mathcal{D}$.
Construct the following graph~$G$.
\begin{enumerate}[Step 1.]
\item Initialise $G$ to be the complete graph $K_n$ and label its vertices $1, 2, \dots, n$.
\item For each $B \in \mathcal{B}$:
add new vertices $x_B$, $y_B$ to $G$;
add paths $P(x_B)$, $P(y_B)$ of length $d-1$ to $G$ that terminate in $x_B$, $y_B$, respectively;
and join $x_B$ and $y_B$ to each of the vertices of~$B$.
\end{enumerate}

We now prove the result by showing that the collection of minimum distance\-/$d$\-/dominating sets of $G$ equals~$\mathcal{D}$.

\begin{description}
\item[(a)]
\emph{A subset $D$ of $G$ is a distance\-/$d$\-/dominating set of $G$ if and only if it has nonempty intersection with each $B \in \mathcal{B}$.} 

A subset $D$ of $G$ that is a distance\-/$d$\-/dominating set of $G$ must have nonempty intersection with each $B \in \mathcal{B}$ in order that $D$ distance\-/$d$\-/dominates the pendant vertex of each path $P(x_B)$. 
Conversely, a subset $D$ of $G$ that has nonempty intersection with each $B \in \mathcal{B}$ distance-$1$-dominates the vertices labelled $1,2,\dots,n$ because $G$ was initialised to $K_n$ in Step~1, and distance-$d$-dominates the vertices of $P(x_B)$ and $P(y_B)$ for every $B \in \mathcal{B}$ by construction.

\item[(b)]
\emph{A minimum distance\-/$d$\-/dominating set of~$G$ is a subset of $[n]$.}

Suppose, for a contradiction, that for some $B \in \mathcal{B}$ a vertex $w$ in $P(x_B)$ is contained in a minimum distance\-/$d$\-/dominating set $D$ of~$G$.
By part (a), the set $D$ contains some vertex of $B$. This vertex of $B$ distance\-/$d$\-/dominates all vertices of $P(x_B)$ and $P(y_B)$ by construction, and is at least as close to each of the other vertices of $G$ as $w$ is. So we may obtain a smaller distance\-/$d$\-/dominating set than $D$ by removing $w$ from $D$, giving the required contradiction. 
\end{description}

By parts (a) and (b), the collection of minimum distance\-/$d$\-/dominating sets of $G$ is the collection of minimal subsets of $[n]$ having nonempty intersection with each $B \in \mathcal{B}$, namely the blocker of $\mathcal{B}$; by Theorem~\ref{thm:blocker-blocker}, this blocker equals~$\mathcal{D}$.
\qed
\vspace{2mm}
Figure~\ref{fig:gammaddotg} illustrates the proof given above, using the example of $d = 3$ and $\mathcal{D} = \big\{\{1,2,3\}, \{1,2,4\}\big\}$. We set $k=3$ and $n = 4$ and initialise $G$ to be $K_4$ with vertex labels $1, 2, 3, 4$.
The blocker of $\mathcal{D}$ is $\mathcal{B} = \big\{\{1\},\{2\},\{3,4\}\big\}$. For the element $\{1\}$ of $\mathcal{B}$, we add vertices $x_1$, $y_1$ to $G$, add paths $P(x_1)$, $P(y_1)$ of length 2 to $G$ that terminate in vertices $x_1$, $y_1$ respectively, and join $x_1$ and $y_1$ to the vertex~$1$. 
We repeat for each other element of~$\mathcal{B}$. The resulting graph $G$ has $22$ vertices and $26$ edges.
In general, the graph $G$ constructed according to the proof of Theorem~\ref{thm:realisegammaddotg} has
$n + 2d |\mathcal{B}|$ vertices and
$\binom{n}{2} + 2 \sum\limits_{B \in \mathcal{B}} |B| + 2(d-1)|\mathcal{B}|$ edges.

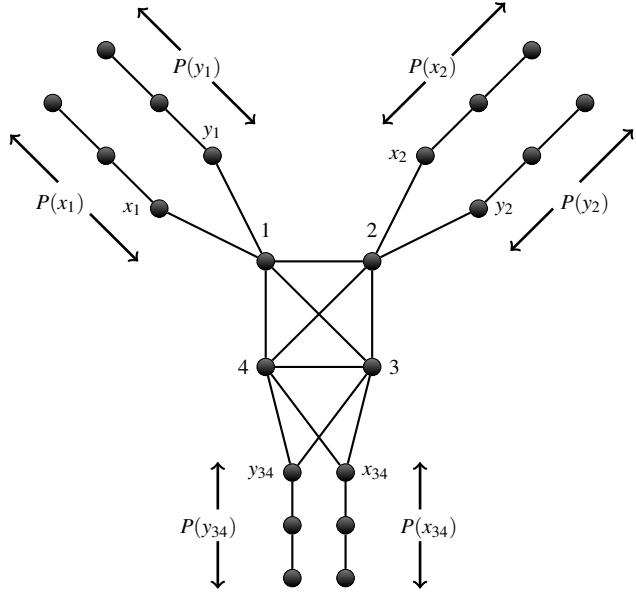
\begin{figure}
\centering
\begin{tikzpicture}[scale=0.7]
\node [n, label={[yshift=0.1cm]$1$}] (v1) at (0,2) {};
\node [n, label={[yshift=0.1cm]$2$}] (v2) at (2,2) {};
\node [n, label=right: {$3$}] (v3) at (2,0) {};
\node [n, label=left: {$4$}] (v4) at (0,0) {};
\draw [e] (v1) -- (v2) -- (v3) -- (v4) -- (v1) -- (v3);;
\draw [e] (v2) -- (v4);

\node [n, label=right: {$x_{34}$}] (x34a) at (1.5,-2) {};
\node [n, label={[xshift=1.1cm,yshift=-0.4cm]$P(x_{34})$}] (x34b) at (1.5,-3) {};
\node [n, label=right: {$ $}] (x34c) at (1.5,-4) {};
\draw[->,line width=1pt] (2.9,-2.7) to (2.9,-1.8);
\draw[->,line width=1pt] (2.9,-3.3) to (2.9,-4.2);

\node [n, label=left: {$y_{34}$}] (y34a) at (0.5,-2) {};
\node [n, label={[xshift=-1.1cm,yshift=-0.4cm]$P(y_{34})$}] (y34b) at (0.5,-3) {};
\node [n, label=left: {$ $}] (y34c) at (0.5,-4) {};
\draw[->,line width=1pt] (-0.9,-2.7) to (-0.9,-1.8);
\draw[->,line width=1pt] (-0.9,-3.3) to (-0.9,-4.2);
\draw [e] (x34c) -- (x34b) -- (x34a) -- (v3) -- (y34a) -- (y34b) -- (y34c);
\draw [e] (x34a) -- (v4) -- (y34a);

\node [n, label=left: {$x_{2}$}] (x2a) at (3,4) {};
\node [n, label=above: {$ $}] (x2b) at (4,5) {};
\node [n, label={[xshift=-1.3cm,yshift=-0.6cm]$P(x_2)$}] (x2c) at (5,6) {};
\draw[->,line width=1pt] (3.5,5.9) to (4.5,6.9);
\draw[->,line width=1pt] (3.0,5.4) to (2.2,4.6);

\node [n, label=right: {$y_{2}$}] (y2a) at (4,3) {};
\node [n, label=right: {$ $}] (y2b) at (5,4) {};
\node [n, label={[xshift=-0cm,yshift=-1.7cm]$P(y_2)$}] (y2c) at (6,5) {};
\draw[->,line width=1pt] (5.9,3.5) to (6.9,4.5);
\draw[->,line width=1pt] (5.4,3.0) to (4.6,2.2);
\draw [e] (x2c) -- (x2b) -- (x2a) -- (v2) -- (y2a) -- (y2b) -- (y2c);

\node [n, label=left: {$x_{1}$}] (x1a) at (-2,3) {};
\node [n, label=left: {$ $}] (x1b) at (-3,4) {};
\node [n, label={[xshift=0.1cm,yshift=-1.7cm]$P(x_1)$}] (x1c) at (-4,5) {};
\draw[->,line width=1pt] (-3.9,3.5) to (-4.8,4.4);
\draw[->,line width=1pt] (-3.3,2.9) to (-2.4,2.0);

\node [n, label=above: {$y_{1}$}] (y1a) at (-1,4) {};
\node [n, label=above: {$ $}] (y1b) at (-2,5) {};
\node [n, label={[xshift=1.2cm,yshift=-0.6cm]$P(y_1)$}] (y1c) at (-3,6) {};
\draw[->,line width=1pt] (-1.6,6.0) to (-2.4,6.8);
\draw[->,line width=1pt] (-1.0,5.4) to (-0.2,4.6);
\draw [e] (x1c) -- (x1b) -- (x1a) -- (v1) -- (y1a) -- (y1b) -- (y1c);
\end{tikzpicture}
\caption{The graph $G$ for $d = 3$ and $\mathcal{D} = \big\{ \{1,2,3\}, \{1,2,4\} \big\}$, as constructed in the proof of Theorem~\ref{thm:realisegammaddotg}. The blocker of $\mathcal{D}$ is $\mathcal{B} = \big\{ \{1\},\{2\},\{3,4\} \big\}$.}
\label{fig:gammaddotg}
\end{figure}

In the special case $d=1$, the constructed graph $G$ has $n+2|\mathcal{B}|$ vertices and 
$\binom{n}{2} + 2 \sum\limits_{B \in \mathcal{B}} |B|$ edges. 
This special case is also proved constructively in \cite[Theorem~2]{hhldom}, by means of a different graph containing
\[
n + (k + 1) \binom{n}{k - 1} + (k + 1)\left( \binom{n}{k} - |\mathcal{D}|\right)
\]
vertices and
\[
\binom{n}{2} + (k + 1)(n - k + 1) \binom{n}{k - 1} + (k + 1)(n - k)\left( \binom{n}{k} - |\mathcal{D}|\right)
\]
edges. The construction presented here is simpler and, in some cases, much more economical. For example, for $d=1$ and $\mathcal{D} = \big\{ \{1,2,3\}, \{1,2,4\} \big\}$ (giving $k=3$ and $n=4$ and $\mathcal{B} = \big\{ \{1\},\{2\},\{3,4\} \big\}$), the graph constructed here contains 10 vertices and 14 edges whereas the graph constructed according to the method of \cite[Theorem~2]{hhldom} contains 36 vertices and 62 edges
(see Figure~\ref{fig:mainthmexin1}).
For a further example, for $d=1$ and
$\mathcal{D} = \big\{ \{1,2,3,4\}, \{1,2,3,5\}, \{1,2,4,6\}, \{2,3,5,7\}, \{3,5,7,8\} \big\}$
(giving $k=4$ and $n=8$ and 
$\mathcal{B} = \big\{ \{1,3\},\! \{1,5\},\! \{1,7\},\! \{2,3\},\! \{2,5\},\! \{2,7\},\! \{2,8\},\! \{3,4\},\! \{3,6\},\! \{4,5\} \big\}$),
the graph constructed here contains 28 vertices and 68 edges whereas the graph constructed according to the method of \cite[Theorem~2]{hhldom} contains 613 vertices and 2728 edges.

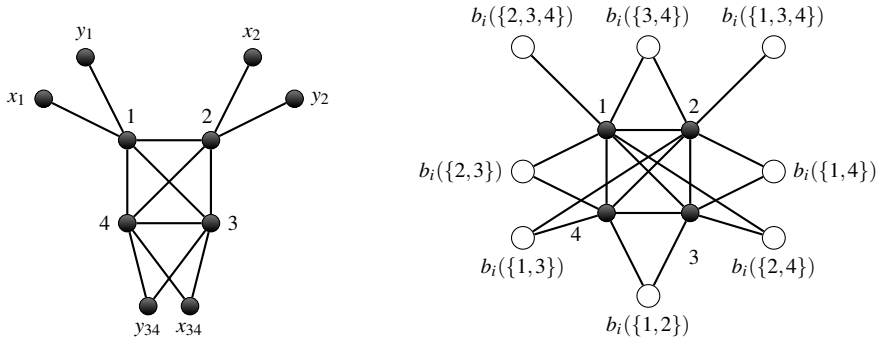
\begin{figure}
\begin{center}

\begin{tikzpicture}[scale=0.55]
\node [n, label={[xshift=0.05cm]$1$}] (v1) at (0,2) {};
\node [n, label={[xshift=-0.05cm]$2$}] (v2) at (2,2) {};
\node [n, label=right: {$3$}] (v3) at (2,0) {};
\node [n, label=left: {$4$}] (v4) at (0,0) {};
\draw [e] (v1) -- (v2) -- (v3) -- (v4) -- (v1) -- (v3);;
\draw [e] (v2) -- (v4);

\node [n, label=below: {$x_{34}$}] (x12) at (1.5,-2) {};
\node [n, label=below: {$y_{34}$}] (y12) at (0.5,-2) {};
\draw [e] (x12) -- (v3) -- (y12) -- (v4) -- (x12);

\node [n, label=above: {$x_{2}$}] (x134) at (3,4) {};
\node [n, label=right: {$y_{2}$}] (y134) at (4,3) {};
\draw [e] (x134) -- (v2) -- (y134);

\node [n, label=left: {$x_{1}$}] (x234) at (-2,3) {};
\node [n, label=above: {$y_{1}$}] (y234) at (-1,4) {};
\draw [e] (x234) -- (v1) -- (y234);
\end{tikzpicture}
\hspace{3em}
\begin{tikzpicture}[scale=0.55]
\node [n, label={[xshift=-0.05cm]$1$}] (v1) at (0,2) {};
\node [n, label={[xshift=0.05cm]$2$}] (v2) at (2,2) {};
\node [n, label={[xshift=0.05cm, yshift=-0.9cm]$3$}] (v3) at (2,0) {};
\node [n, label={[xshift=-0.4cm, yshift=-0.6cm]$4$}] (v4) at (0,0) {};
\draw [e] (v1) -- (v2) -- (v3) -- (v4) -- (v1) -- (v3);;
\draw [e] (v2) -- (v4);

\node [nwhite, label=above: {$b_i(\{3,4\})$}] (x34) at (1,4) {};
\draw [e] (v2) -- (x34) -- (v1);

\node [nwhite, label=below: {$b_i(\{1,2\})$}] (x12) at (1,-2) {};
\draw [e] (v4) -- (x12) -- (v3);

\node [nwhite, label=right: {$b_i(\{1,4\})$}] (x14) at (4,1) {};
\draw [e] (v3) -- (x14) -- (v2);

\node [nwhite, label=left: {$b_i(\{2,3\})$}] (x23) at (-2,1) {};
\draw [e] (v4) -- (x23) -- (v1);

\node [nwhite, label=below: {$b_i(\{1,3\})$}] (x13) at (-2,-0.6) {};
\draw [e] (v4) -- (x13) -- (v2);

\node [nwhite, label=below: {$b_i(\{2,4\})$}] (x24) at (4,-0.6) {};
\draw [e] (v3) -- (x24) -- (v1);

\node [nwhite, label=above: {$b_i(\{1,3,4\})$}] (x134) at (4,4) {};
\draw [e] (x134) -- (v2);

\node [nwhite, label=above: {$b_i(\{2,3,4\})$}] (x234) at (-2,4) {};
\draw [e] (x234) -- (v1);
\end{tikzpicture}

\caption{The graph $G$ for $d=1$ and $\mathcal{D} = \big\{\{1,2,3\}, \{1,2,4\}\big\}$, as constructed in the proof of Theorem~\ref{thm:realisegammaddotg} (left) and in the proof of \cite[Theorem~2]{hhldom} (right, where each white vertex represents a set of four vertices).}
\label{fig:mainthmexin1}
\end{center}
\end{figure}

\section{Previous results}
\label{sec:previousresults}

In this section we summarise previous proven and claimed results on which graphs are labellable and which are (minimally) unlabellable, taken primarily from the literature on induced subgraphs of Johnson graphs.
In several cases, the consequences for $\gamma_1$-graphs implied by Corollary~\ref{cor:realisegammaddotg} were previously derived in the $\gamma$-graph literature only with considerable effort. Indeed, even the result for 1-realisable graphs implied by Observation~\ref{obs:induced} was proved in~\cite[Theorem~2.1]{sarinduce} only by means of a complicated construction involving many vertices and edges.

We begin with some general constructions of labellable graphs.
\begin{theorem}
\label{thm:genconstr}
\mbox{}
\begin{enumerate}
\item[(i)] \cite[Proposition~6]{naimishaw}
A graph is labellable if and only if each of its components is labellable.

\item[(ii)] \cite[Proposition~7]{naimishaw} 
The Cartesian product of two labellable graphs is labellable.

\item[(iii)] \cite[Proposition~5]{naimishaw} 
A graph $G$ is labellable if and only if the graph obtained by repeatedly deleting isolated vertices and pendant vertices from $G$ is empty or labellable.
\end{enumerate}
\end{theorem}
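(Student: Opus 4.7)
The plan is to prove each part by exhibiting explicit labellings, with the bulk of the work lying in the constructive directions.

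For part (i), the forward direction is immediate from Observation~\ref{obs:induced}, since each component is an induced subgraph. For the converse, I would first relabel each component $G_i$'s labelling so that the $m$ labellings use pairwise disjoint ground sets, then pad each labelling with fresh symbols so that all labels have a common size $k \ge 2$. A pair of labels from the same component continues to satisfy the labelling condition (padding preserves it), while a pair of labels from distinct components intersects in the empty set, whose size $0$ differs from $k-1$; so no spurious edges arise between components, and we obtain a valid labelling of~$G$.

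For part (ii), given labellings $\ell_G$ of $G$ by $k_G$-subsets of a set $U$ and $\ell_H$ of $H$ by $k_H$-subsets of a disjoint set $V$, I would label each vertex $(u,v)$ of $G \square H$ by $\ell_G(u) \cup \ell_H(v)$, a $(k_G+k_H)$-subset of $U \cup V$. By disjointness of $U$ and $V$, the intersection of two such labels has size $|\ell_G(u_1) \cap \ell_G(u_2)| + |\ell_H(v_1) \cap \ell_H(v_2)|$, and a short case check shows that this sum equals $k_G + k_H - 1$ precisely when either $u_1 = u_2$ and $v_1 v_2 \in E(H)$, or $v_1 = v_2$ and $u_1 u_2 \in E(G)$, which coincides with adjacency in $G \square H$.

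For part (iii), the forward direction again follows from Observation~\ref{obs:induced}. For the converse, I would recover $G$ from the reduced graph by adding back the deleted vertices in reverse order of deletion; each newly added vertex is then either isolated or pendant in the current graph. It therefore suffices to show that adding a single isolated or pendant vertex to a labellable graph $G'$ yields a labellable graph. The isolated case is easy: after padding the labelling of $G'$ to ensure $k \ge 2$, assign the new vertex a label of $k$ entirely fresh symbols, making all intersections empty and hence of size $0 \ne k-1$. The pendant case is the main obstacle. Attaching a new pendant $w$ to some $v \in V(G')$ with the natural candidate $\ell(w) = (\ell(v) \setminus \{x\}) \cup \{z\}$, for a fresh symbol $z$ and some $x \in \ell(v)$, produces the edge $vw$ as required, but may also produce unwanted edges to neighbours $u$ of $v$ unless $x \in \ell(u)$; since $\bigcap_{u \in N(v)} \ell(u)$ may be empty in general, no such $x$ need exist. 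I would circumvent this by first padding the labelling of $G'$ with a single new symbol adjoined to every existing label, and then choosing $x$ to be this padding symbol; this guarantees $x \in \ell(u)$ for every vertex $u$ of $G'$, so that $|\ell(w) \cap \ell(u)| = k-2 \ne k-1$ for each neighbour $u$ of $v$, completing the construction.
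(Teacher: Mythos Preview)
The paper does not supply its own proof of this theorem: all three parts are quoted as results of Naimi and Shaw~\cite{naimishaw} and stated without argument. Your proposal is therefore not being compared against an in-paper proof, but it is correct and follows what is essentially the standard construction.

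A couple of minor points worth tightening. In part~(i), when you pad the components to a common label size, you should make explicit that the padding symbols are chosen separately for each component so that the ground sets remain pairwise disjoint; otherwise the claim that labels from distinct components intersect in the empty set would fail. In part~(iii), your pendant-vertex argument is the key step and is handled well: the padding-then-remove trick guarantees that the chosen element~$x$ lies in every existing label, so $|\ell(w)\cap\ell(u)| = |\ell(v)\cap\ell(u)| - 1$ for all $u\neq v$, which is at most $k-2$ in every case. You only stated this for neighbours~$u$ of~$v$, but the same computation (giving at most $k-3$) disposes of non-neighbours as well; it would be worth saying so explicitly. The base case where the reduced graph is empty also deserves a sentence, though it is trivial.
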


\noindent The result for 1-realisable graphs implied by Theorem~\ref{thm:genconstr}~(ii) was proved in \cite[Theorem~3.4]{lakshmanan}.

We next describe several infinite families of labellable graphs.
For a positive integer $n$, the \emph{hypercube graph} $Q_n$ has vertices labelled by the $2^n$ binary $n$-tuples, and an edge joining two vertices if and only if their corresponding labels differ in exactly one position.
For an integer $n \ge 3$, the \emph{prism graph} $\Pi_n$ on $2n$ vertices is formed by joining corresponding vertices of two cycle graphs~$C_n$.

\begin{theorem} 
\label{thm:labfams}
\mbox{}
\begin{enumerate}
\item[(i)] \cite[Proposition~4]{naimishaw} 
The complete graph $K_n$ on $n \ge 1$ vertices is labellable.

\item[(ii)] \cite[Proposition~4]{naimishaw} 
The cycle graph $C_n$ on $n \ge 3$ vertices is labellable.

\item[(iii)] (Corollary of Theorem~\ref{thm:genconstr}~(iii))
Every tree is labellable and every graph containing exactly one cycle is labellable.

\item[(iv)] \cite[Theorem~4.2]{malikali} 
For each positive integer $n$, the hypercube graph $Q_n$ is labellable.

\item[(v)] \cite[Theorem~4.1]{malikali} 
For each integer $n \ge 3$, the prism graph $\Pi_n$ is labellable.
\end{enumerate}
\end{theorem}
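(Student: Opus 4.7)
For (i), I would exhibit an explicit labelling with $k=1$ in which vertex $i$ of $K_n$ receives the singleton $\{i\}$; any two distinct singletons intersect in the empty set, of size $0 = k-1$, so every pair of vertices is adjacent, producing $K_n$. For (ii), writing the vertices of $C_n$ in cyclic order as $v_0, v_1, \ldots, v_{n-1}$, I would take $k=2$ and label $v_i$ by $L_i = \{i, i+1\}$ (indices mod $n$). Then for $i \ne j$ the intersection $L_i \cap L_j$ has size $1 = k-1$ precisely when $|i - j| \equiv 1 \pmod{n}$, which is exactly the adjacency relation of $C_n$, so this labelling realises $C_n$. (Note that $n=3$ degenerates correctly, since $C_3 = K_3$ and all three pairs of labels share one element.)

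Parts (iii) and (v) I would deduce directly from Theorem~\ref{thm:genconstr}. For (iii), any finite tree can be whittled down to the empty graph by iteratively deleting pendant and isolated vertices, so Theorem~\ref{thm:genconstr}(iii) immediately certifies that every tree is labellable; for a graph with exactly one cycle, the same pendant-deletion process terminates at that cycle together with possibly some isolated vertices contributed by tree components, and this reduced graph is labellable by part~(ii) and Theorem~\ref{thm:genconstr}(i), whence Theorem~\ref{thm:genconstr}(iii) again gives the conclusion. Part (v) is a one-line application of Theorem~\ref{thm:genconstr}(ii): the prism $\Pi_n$ is the Cartesian product $C_n \,\square\, K_2$ of the two labellable graphs supplied by~(i) and~(ii).

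For part (iv), a naive choice such as $L_v = \{i : v_i = 1\}$ fails because the resulting labels have different sizes. Instead I would take $k = n$, identify $\{1,2,\ldots,2n\}$ with $[n] \times \{0,1\}$, and label each vertex $v = (v_1, \ldots, v_n)$ by $L_v = \{(i, v_i) : 1 \le i \le n\}$, a set of size $n$. For distinct vertices $v, w$, each coordinate $i$ at which $v_i = w_i$ contributes one common element to $L_v \cap L_w$ and each coordinate at which they differ contributes none, so $|L_v \cap L_w| = n - d_H(v, w)$, where $d_H$ denotes Hamming distance. Hence $|L_v \cap L_w| = k - 1 = n - 1$ if and only if $d_H(v, w) = 1$, if and only if $v$ and $w$ are adjacent in~$Q_n$, and the labelling is valid.

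The only mild obstacle across the whole statement is locating the uniform-size labelling of $Q_n$ in part~(iv); the other four parts are either immediate explicit constructions or direct corollaries of Theorem~\ref{thm:genconstr}.
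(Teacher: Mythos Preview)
Your arguments are correct. Since the paper does not supply its own proofs here---parts (i), (ii), (iv), (v) are quoted from the literature, and (iii) is simply flagged as a corollary of Theorem~\ref{thm:genconstr}(iii)---your explicit constructions add content rather than duplicate. For (iv), the paper remarks just after the theorem that $Q_n$ can alternatively be handled inductively via $Q_n \cong Q_{n-1} \,\square\, Q_1$ and Theorem~\ref{thm:genconstr}(ii); your direct coordinate labelling $L_v = \{(i,v_i) : 1 \le i \le n\}$ is a genuinely different route, and arguably cleaner because it exposes the identity $|L_v \cap L_w| = n - d_H(v,w)$ without an induction. Your treatment of (v) via $\Pi_n \cong C_n \,\square\, K_2$ is in the same Cartesian-product spirit, though the paper does not make this observation for prisms. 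One small slip in~(ii): the adjacency criterion in $C_n$ is $i - j \equiv \pm 1 \pmod n$, not $|i-j| \equiv 1 \pmod n$; as written, your condition misses the wraparound edge $\{v_0, v_{n-1}\}$ for every $n \ge 3$. The labelling itself is fine---only the stated justification needs this correction.
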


\noindent The result for 1-realisable graphs implied by Theorem~\ref{thm:labfams}~(ii), (iii), and~(iv) was proved in \cite[Theorem~2.4]{sstrees}, \cite[Theorem~2.6]{sstrees}, and \cite[proof of Lemma~2.2]{bien}, respectively.
Theorem~\ref{thm:labfams}~(iv) can alternatively be proved by noting that $Q_1$ is trivially labellable, regarding $Q_n$ for $n \ge 2$ as the Cartesian product of $Q_{n-1}$ and $Q_1$, and then using Theorem~\ref{thm:genconstr}~(ii).

We now specify all minimally unlabellable graphs on at most five vertices.
\begin{theorem}[{\cite[Theorem 2.7]{sstrees}, \cite[Theorem 2.3]{lakshmanan}, \cite[Theorem~2.1]{sarinduce}; independently \cite[Theorems~3.1 and~3.2]{malikalidelta}}] 
\label{thm:minunlab5}
There are exactly four minimally unlabellable graphs on at most five vertices, namely:

\begin{inparaenum}[(i)]
\item \begin{tikzpicture}[scale=0.8]
\node [n, label=below: {$U_5$}] (v1) at (0,0) {};
\node [n, label=left: {$U_1$}] (v2) at (-1.5,1.5) {};
\node [n, label=below: {$U_4$}] (v3) at (0,1.5) {};
\node [n, label=right: {$U_2$}] (v4) at (1.5,1.5) {};
\node [n, label=above: {$U_3$}] (v5) at (0,3) {};
\draw [e] (v1) -- (v2) -- (v5) -- (v4) -- (v1);
\draw [e] (v2) -- (v3) -- (v4);
\end{tikzpicture} \hspace{0.5in}
\item \begin{tikzpicture}[scale=1.0]
\node [n, label=below: {$V_2$}] (v1) at (0,0) {};
\node [n, label=left: {$V_3$}] (v2) at (-1,1) {};
\node [n, label=right: {$V_4$}] (v3) at (1,1) {};
\node [n, label=above: {$V_1$}] (v4) at (0,2) {};
\node [n, label=right: {$V_5$}] (v5) at (2.5,1) {};
\draw [e] (v1) -- (v2) -- (v4) -- (v3) -- (v1) -- (v4) -- (v5) -- (v1);
\end{tikzpicture}

\item \begin{tikzpicture}[scale=0.8]
\node [n, label=below: {$Y_2$}] (v1) at (0,0) {};
\node [n, label=left: {$Y_3$}] (v2) at (-1.5,1.5) {};
\node [n, label=right: {$Y_4$}] (v3) at (0,1.5) {};
\node [n, label=right: {$Y_5$}] (v4) at (1.5,1.5) {};
\node [n, label=above: {$Y_1$}] (v5) at (0,3) {};
\draw [e] (v1) -- (v3) -- (v5) -- (v4) -- (v1) -- (v2) -- (v5);
\draw [e] (v2) -- (v3);
\end{tikzpicture} \hspace{0.7in}
\item \begin{tikzpicture}[scale=1.1]
\node [n, label=left: {$Z_2$}] (v1) at (-1,0.1) {};
\node [n, label=right: {$Z_4$}] (v2) at (1,0.1) {};
\node [n, label=above: {$Z_1$}] (v3) at (0,1) {};
\node [n, label=below: {$Z_3$}] (v4) at (-0.6,-1) {};
\node [n, label=below: {$Z_5$}] (v5) at (0.6,-1) {};
\draw [e] (v5) -- (v2) -- (v4) -- (v3) -- (v5) -- (v1) -- (v2) -- (v3) -- (v1) --(v4);
\end{tikzpicture}
\end{inparaenum}
\end{theorem}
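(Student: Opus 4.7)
My plan is to establish the theorem in three parts:
(1) show each of $U, V, Y, Z$ is unlabellable;
(2) show that every proper induced subgraph of each of the four graphs is labellable; and
(3) show that every graph on exactly five vertices not isomorphic to one of $U, V, Y, Z$ is labellable.

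For (1), I would apply the structural labelling constraints to be developed in Section~\ref{sec:fll}. The prototype is $U = K_{2,3}$: suppose it admits a labelling by $k$-subsets with the two degree-3 vertices labelled $A,B$ and the three degree-2 vertices labelled $X,Y,Z$. Each of $X,Y,Z$ satisfies $|X \cap A| = |X \cap B| = k-1$, while non-adjacency of $A$ and $B$ gives $|A \cap B| \le k-2$. A short case analysis shows that this forces $|A \cap B| = k-2$ and that every valid $X$ has the form $(A \cap B) \cup \{a,b\}$ with $a \in A \setminus B$ and $b \in B \setminus A$, yielding only four such $k$-subsets; among these four, no three are pairwise non-adjacent, contradicting the independence of $\{X,Y,Z\}$. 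The arguments for $V$, $Y$, and $Z$ are of the same flavour and additionally invoke the standard dichotomy that a clique of size at least three in a Johnson graph is either a \emph{star} whose members share a common $(k-1)$-subset, or a \emph{simplex} whose members all lie in a common $(k+1)$-subset. For $V = K_{2,3} + e$ one applies the dichotomy to the triangle on the two degree-4 vertices together with a degree-2 vertex; for $Z = K_5 - e$ one applies it to each of the two $K_4$'s induced on the neighbours of the non-adjacent pair; and for $Y$ one applies it to the triangle on the three degree-3 vertices that share a common non-neighbour. In each case a short count of the admissible labels for the remaining vertex produces a contradiction.

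For (2), each of the four graphs has exactly five vertices, so every proper induced subgraph has at most four vertices, and it suffices to show that every graph on at most four vertices is labellable. By Theorem~\ref{thm:genconstr}~(i) this reduces to the connected graphs on at most four vertices, namely $K_1$, $K_2$, $P_3$, $K_3$, $P_4$, $K_{1,3}$, $C_4$, the paw, the diamond, and $K_4$. Theorem~\ref{thm:labfams}~(i)--(iii) handles all of these except the diamond $K_4 - e$, which admits the explicit labelling $\{1,2\}, \{1,3\}, \{1,4\}, \{2,3\}$.

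For (3), since $U, V, Y, Z$ each have five vertices, none of them can appear as a \emph{proper} induced subgraph of another five-vertex graph, so every five-vertex graph not isomorphic to one of these four must be shown directly to be labellable. I expect this to be the main obstacle: there are thirty-four graphs on five vertices to consider. Theorem~\ref{thm:genconstr} and Theorem~\ref{thm:labfams} dispatch the disconnected graphs, the trees, $C_5$, $K_5$, and any graph with exactly one cycle, while the remaining cases require explicit ad hoc labellings with suitably chosen $k$. I would relegate this case-by-case enumeration to Appendix~A.
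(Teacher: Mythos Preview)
Your three-part plan matches the paper's structure exactly: Section~\ref{sec:minunlab5} proves the four graphs unlabellable, Appendix~A exhibits explicit labellings for the remaining $27$ connected graphs on at most five vertices, and Theorem~\ref{thm:genconstr}~(i) handles disconnected graphs. Part~(2) is identical to what the paper does implicitly (all graphs on at most four vertices appear, labelled, in Appendix~A), and Part~(3) is the same enumeration.

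The difference is in the execution of Part~(1). The paper uniformly applies the packaged Lemmas of Section~\ref{sec:fll}: for $U$ it uses the claw lemma (Lemma~\ref{lem:claw}) followed by the $P_3$ lemma; for each of $V$, $Y$, $Z$ it first applies the $K_4-e$ lemma (Lemma~\ref{lem:k4e}) to pin down four labels to $12X,13X,23X,34X$, and then finishes with Lemma~\ref{lem:p3} or~\ref{lem:k3}. Your direct counting argument for $K_{2,3}$ is correct and arguably cleaner than the paper's claw-based route. Your star/simplex dichotomy is precisely Lemma~\ref{lem:k3}, so for $V$, $Y$, $Z$ you are using the same ingredient, just without first passing through the $K_4-e$ normal form; the paper's $K_4-e$ packaging buys a shorter endgame (one remaining vertex, one lemma application), whereas applying the dichotomy to cliques directly as you suggest entails a few more case distinctions but avoids stating Lemma~\ref{lem:k4e}.

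One small imprecision: in $Y$ there are \emph{four} vertices of degree~$3$ and no triangle among them shares a common non-neighbour, so your description of which triangle to analyse needs adjusting. The paper instead takes the induced $K_4-e$ on $Y_1,Y_2,Y_3,Y_4$ (missing edge $Y_1Y_2$), fixes its labels via Lemma~\ref{lem:k4e}, and then shows no label for $Y_5$ is consistent. Your approach still goes through once you pick an appropriate triangle (either $Y_1Y_3Y_4$ or $Y_2Y_3Y_4$) and chase the constraints, but the sketch as written misidentifies the configuration.
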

\vspace{1em}
Malik and Ali \cite[Theorem~4.3]{malikali} proved that the complete bipartite graph $K_{m,n}$ is unlabellable when the conditions $m \ge 2$ and $n \ge 3$ both hold, and that the graph $K_n-e$ is unlabellable for all $n \ge 5$ and an arbitrary edge~$e$; these results follow by combining Observation~\ref{obs:induced} with Theorem~\ref{thm:minunlab5}~(i) and (iv), respectively. 

We finally present several claims that are stated without proof in~\cite{malikali}.
For an integer $n \ge 4$, the \emph{wheel graph} $W_n$ is formed by joining a single vertex to every vertex of a cycle graph on $n-1$ vertices.
For positive integers $m$ and $n$, the \emph{fan graph} $F_{m,n}$ is formed by joining $m$ isolated vertices to every vertex of a path on $n$ vertices.

\begin{cclaim}[{\cite[p.453]{malikali}}]
\label{claim:Weven}
The wheel graph $W_n$ is unlabellable for even $n \ge 6$.
\end{cclaim}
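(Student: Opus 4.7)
The plan is to suppose $W_n$ is labellable for some even $n \ge 6$ and then derive a parity contradiction from the fact that the rim is an odd cycle of length at least $5$. Denote the hub by $h$ and the rim by $v_1, \ldots, v_{n-1}$, so that $v_1 v_2 \cdots v_{n-1} v_1$ is a cycle of odd length $n-1 \ge 5$. Set $H := L(h)$. Because each $v_i$ is adjacent to $h$, the label $L(v_i)$ is obtained from $H$ by a single swap: there exist unique $a_i \in H$ and $b_i \notin H$ with $L(v_i) = (H \setminus \{a_i\}) \cup \{b_i\}$. I encode $v_i$ by the ordered pair $\pi_i := (a_i, b_i)$.

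The first step is to classify the relationships between the pairs. Since both $L(v_i)$ and $L(v_j)$ come from $H$ by a single swap, $|L(v_i) \triangle L(v_j)| \in \{0,2,4\}$, and distinctness of labels rules out $0$. Hence $v_i$ and $v_j$ are adjacent (symmetric difference of size~$2$) if and only if $\pi_i$ and $\pi_j$ agree in exactly one coordinate, and they are non-adjacent (size~$4$) if and only if $a_i \ne a_j$ \emph{and} $b_i \ne b_j$. Using this, I assign each rim edge $v_i v_{i+1}$ a type: type $A$ if $a_i = a_{i+1}$, and type $B$ if $b_i = b_{i+1}$; exactly one of these holds.

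The heart of the argument is to show that types strictly alternate around the rim. For any three consecutive rim vertices $v_{i-1}, v_i, v_{i+1}$, the outer two are non-adjacent on the rim, using $n-1 \ge 5$ (so that distance-$2$ vertices of $C_{n-1}$ are non-adjacent and distinct). Therefore $a_{i-1} \ne a_{i+1}$ and $b_{i-1} \ne b_{i+1}$. Two consecutive $A$-edges would force $a_{i-1} = a_i = a_{i+1}$, contradicting the first inequality; two consecutive $B$-edges would contradict the second. So consecutive rim edges must carry opposite types.

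The only remaining obstacle, which dissolves at once, is to invoke the parity of the rim: an odd cycle admits no proper $2$-edge-coloring, so the required alternation around $C_{n-1}$ is impossible. This contradiction completes the proof.
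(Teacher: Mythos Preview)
Your proof is correct and follows essentially the same route as the paper's. The paper phrases the argument in terms of Lemmas~\ref{lem:k3} and~\ref{lem:k4e}: each triangle $\{v,v_i,v_{i+1}\}$ is of type $\alpha$ or $\beta$, and adjacent triangles (forming an induced $K_4-e$) must have opposite types, forcing an impossible alternation around the odd rim. Your swap-pair encoding $(a_i,b_i)$ is just a concrete realisation of this: a rim edge of your type~$A$ (shared $a$-coordinate) is exactly a triangle of the paper's type~$\beta$, and your type~$B$ is the paper's type~$\alpha$; your direct check that distance-$2$ rim vertices satisfy $a_{i-1}\ne a_{i+1}$ and $b_{i-1}\ne b_{i+1}$ is the content of Lemma~\ref{lem:k4e} specialised to this situation. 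So the two arguments coincide, with yours being self-contained rather than delegated to the preparatory lemmas.
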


\begin{cclaim}[{\cite[p.453]{malikali}}]
\label{claim:fan}
\mbox{}
\begin{enumerate}[(i)]
\item[(i)]
The fan graph $F_{1,n}$ is labellable for all $n \ge 1$.
\item[(ii)]
The fan graph $F_{m,n}$ is unlabellable when the conditions $m \ge 4$ and $n \ge 2$ both hold.
\end{enumerate}
\end{cclaim}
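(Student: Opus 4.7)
The plan is to prove part~(i) by exhibiting an explicit labelling and to prove part~(ii) by an induced-subgraph reduction to Theorem~\ref{thm:minunlab5}.

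For part~(i), I would set $k = \lceil n/2 \rceil$, label the hub by $h = \{1,2,\dots,k\}$, and choose $k+1$ distinct external symbols $e_1, e_2, \dots, e_{k+1}$ outside~$h$. For $j = 1, \dots, k$ define the ``staircase'' labels
\begin{equation*}
p_{2j-1} \;=\; (h \setminus \{j\}) \cup \{e_j\}, \qquad p_{2j} \;=\; (h \setminus \{j\}) \cup \{e_{j+1}\},
\end{equation*}
and use the first $n$ of $p_1, \dots, p_{2k}$ as the path labels (the trivial cases $n \le 2$ can be checked by hand). Each $p_i$ is a $k$-subset with $|h \cap p_i| = k-1$, so the hub is adjacent to every~$p_i$. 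For every consecutive pair $(p_{2j-1}, p_{2j})$ the ``deleted'' coordinate $j$ is shared while the ``added'' external coordinates differ, giving intersection of size $k-1$; for every consecutive pair $(p_{2j}, p_{2j+1})$ the external coordinate $e_{j+1}$ is shared while the deleted coordinates differ, again giving intersection $k-1$. For any non-consecutive pair both coordinates differ, producing intersection $k-2$, so no spurious edges appear.

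For part~(ii), I would apply Observation~\ref{obs:induced}. If $n \ge 3$, any two path vertices $p_i, p_j$ with $|i-j|\ge 2$ are non-adjacent in $F_{m,n}$, so together with any three of the $m \ge 4$ hubs they induce~$K_{2,3}$; this is the graph~$U_5$ of Theorem~\ref{thm:minunlab5}(i) and is unlabellable. If $n = 2$ the two path vertices are adjacent, so instead take any three hubs together with $p_1, p_2$: the induced subgraph is three triangles sharing the edge $p_1 p_2$, which is exactly the graph~$V$ of Theorem~\ref{thm:minunlab5}(ii) and is again unlabellable. In either case Observation~\ref{obs:induced} forces $F_{m,n}$ to be unlabellable.

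The main obstacle is part~(i): a naive attempt with $k = 2$ fails for $n \ge 5$, so $k$ must grow with $n$ and one must design a single scheme that simultaneously makes every consecutive intersection equal to $k - 1$ and every non-consecutive intersection equal to $k - 2$. The staircase resolves this by alternating which of the two coordinates (the deleted element of $h$ or the added external element) is held fixed from one step to the next, and the verification reduces to a short parity case analysis on the indices. Part~(ii), by contrast, is a direct reduction to the already-classified small minimally unlabellable graphs.
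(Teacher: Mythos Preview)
Your argument is correct and close to the paper's. For part~(i), your ``staircase'' labelling is exactly the labelling the paper gives for the odd wheel $W_{2m+1}$ in Theorem~\ref{thm:wheel}(i) (with $e_j=m+j$); the paper then deduces the fan case by observing that $F_{1,n}$ is an induced subgraph of $W_{2n+1}$ and invoking Observation~\ref{obs:induced}, whereas you apply the same construction directly to~$F_{1,n}$. For part~(ii), both you and the paper reduce to Theorem~\ref{thm:minunlab5} via Observation~\ref{obs:induced}; your case split on~$n$ is unnecessary, since for all $m\ge 3$ and $n\ge 2$ one can take three hubs together with two \emph{adjacent} path vertices to induce $F_{3,2}$, the graph of Theorem~\ref{thm:minunlab5}(ii), which is how the paper handles it uniformly.
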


\begin{cclaim}[{\cite[p.452]{malikali}}]
\label{claim:minunlab6} 
There are exactly four minimally unlabellable graphs on six vertices, namely:

\begin{inparaenum}[(i)]
\item \begin{tikzpicture}[scale=0.75]
\node [n, label=below: {$U_6$}] (v1) at (0,0) {};
\node [n, label=left: {$U_2$}] (v2) at (-2.5,1.5) {};
\node [n, label=below: {$U_3$}] (v3) at (-1,1.5) {};
\node [n, label=below: {$U_4$}] (v4) at (1,1.5) {};
\node [n, label=right: {$U_5$}] (v5) at (2.5,1.5) {};
\node [n, label=above: {$U_1$}] (v6) at (0,3) {};
\draw [e] (v1) -- (v2) -- (v3) -- (v4) -- (v5) -- (v6) -- (v2);
\draw [e] (v1) -- (v5);
\end{tikzpicture} \hspace{0.4in}
\item \begin{tikzpicture}[scale=1.0]
\node [n, label=below: {$U_4$}] (v1) at (0,0) {};
\node [n, label=left: {$U_2$}] (v2) at (-1,1) {};
\node [n, label=right: {$U_3$}] (v3) at (1,1) {};
\node [n, label=above: {$U_1$}] (v4) at (0,2) {};
\node [n, label=above: {$U_5$}] (v5) at (2,2) {};
\node [n, label=below: {$U_6$}] (v6) at (2,0) {};
\draw [e] (v1) -- (v2) -- (v4) -- (v3) -- (v1) -- (v6) -- (v5) -- (v4);
\draw [e] (v2) -- (v3);
\end{tikzpicture}

\vspace{1em}
\item \begin{tikzpicture}[scale=0.7]
\node [n, label=above: {$U_1$}] (v1) at (2,1.5) {};
\node [n, label=left: {$U_2$}] (v2) at (0,0) {};
\node [n, label=right: {$U_3$}] (v3) at (4,0) {};
\node [n, label={[xshift=0.4cm,yshift=-0.1cm]$U_4$}] (v4) at (2,-0.5) {};
\node [n, label=below: {$U_5$}] (v5) at (1,-2) {};
\node [n, label=below: {$U_6$}] (v6) at (3,-2) {};
\draw [e] (v1) -- (v2) -- (v5) -- (v4) -- (v1) -- (v3) -- (v6);
\draw [e] (v5) -- (v6) -- (v4);
\end{tikzpicture} \hspace{0.75in}
\item \begin{tikzpicture}[scale=0.7]
\node [n, label=above: {$ $}] (v1) at (2,1.5) {};
\node [n, label=left: {$ $}] (v2) at (0,0) {};
\node [n, label=right: {$ $}] (v3) at (4,0) {};
\node [n, label={[yshift=-1.05cm]$ $}] (v4) at (2,-0.5) {};
\node [n, label=below: {$ $}] (v5) at (1,-2) {};
\node [n, label=below: {$ $}] (v6) at (3,-2) {};
\draw [e] (v6) -- (v3) -- (v1) -- (v2) -- (v5) -- (v4);
\draw [e] (v5) -- (v6) -- (v4) -- (v1);
\draw [e] (v2) -- (v4) -- (v3);
\end{tikzpicture}
\end{inparaenum}
\end{cclaim}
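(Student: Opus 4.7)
The plan is to prove the claim in two directions: first, show that each of the four listed graphs is minimally unlabellable; second, show that no other $6$-vertex graph is minimally unlabellable.

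For the first direction, for each of the four graphs I would establish unlabellability by assuming a vertex labelling by $k$-subsets of $\{1,2,3,\ldots\}$ and deriving a contradiction. The key tools will be the labelling-constraint lemmas developed in Section~\ref{sec:fll}, which dictate how the labels of adjacent and non-adjacent vertices in small configurations (triangles, paths of length two, claws, and the like) must relate. The typical argument fixes the label of a carefully chosen vertex, uses adjacency to determine the symmetric differences with the labels of its neighbours, propagates these constraints through the graph, and discovers a forced violation of either an edge or a non-edge condition. Once unlabellability is established, we must verify that every proper induced subgraph is labellable. Since each proper induced subgraph has at most five vertices, by Observation~\ref{obs:induced} and Theorem~\ref{thm:minunlab5} it is enough to check that none of the six $5$-vertex induced subgraphs of each listed graph is isomorphic to $U_5$, $V_5$, $Y_5$, or $Z_5$; this is a finite and routine verification.

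For the second direction, we need to show that every $6$-vertex graph $H$ other than the four listed fails to be minimally unlabellable. Equivalently: if $H$ has six vertices, contains no induced subgraph isomorphic to $U_5$, $V_5$, $Y_5$, or $Z_5$, and is not isomorphic to one of the four listed, then $H$ must be labellable. The strategy is to enumerate all $6$-vertex graphs meeting the forbidden-induced-subgraph condition, and for each one that is not among the four listed, exhibit an explicit labelling by $k$-subsets of $\{1,2,3,\ldots\}$. Although there are $156$ graphs on six vertices up to isomorphism, the forbidden-subgraph condition sharply prunes the list; the remaining cases can be organised by degree sequence or by the structure of a chosen spanning subgraph, with the bulk of the verification deferred to Appendix~B.

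The main obstacle is the second direction: the enumeration is lengthy and will require either a careful structural organisation (for example, analysing graphs by the maximum degree or by the neighbourhood of a high-degree vertex) or computer assistance to keep the case analysis manageable. The unlabellability arguments of the first direction are more delicate but considerably shorter, leaning on the machinery developed in Section~\ref{sec:fll}.
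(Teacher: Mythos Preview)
Your proposal is correct and follows essentially the same approach as the paper: establish unlabellability of the four graphs via the structural lemmas of Section~\ref{sec:fll} (the paper uses Lemmas~\ref{lem:p3}, \ref{lem:k4e}, and~\ref{lem:claw}, and handles graph~(iv) as $W_6$ via Theorem~\ref{thm:wheel}(ii)), deduce minimality from Theorem~\ref{thm:minunlab5}, and handle all remaining six-vertex graphs by explicit enumeration in Appendix~B. One small refinement the paper makes that you omit: by Theorem~\ref{thm:genconstr}(i) it suffices to treat only the $112$ \emph{connected} six-vertex graphs rather than all $156$, which trims the case analysis.
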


\begin{cclaim}[{\cite[p.453]{malikali}}]
\label{claim:minunlab7} 
The following three graphs on seven vertices are minimally unlabellable: \\
\begin{inparaenum}[(i)]
\item \begin{tikzpicture} [scale=0.59]
\node [n, label=below: {$\phantom{U_6}$}] (v1) at (0.5,0) {};
\node [n, label=below: {$ $}] (v2) at (2.5,0) {};
\node [n, label=left: {$ $}] (v3) at (-0.5,1.5) {};
\node [n, label=right: {$ $}] (v4) at (3.5,1.5) {};
\node [n, label=above: {$ $}] (v5) at (0.5,3) {};
\node [n, label=above: {$ $}] (v6) at (2.5,3) {};
\node [n, label=below: {$ $}] (v7) at (1.5,1.5) {};
\draw [e] (v1) -- (v2) -- (v4) -- (v6) -- (v5) -- (v3) -- (v1);
\draw[e] (v3) -- (v7) -- (v4);
\end{tikzpicture} \hspace{0.5em}
\item \begin{tikzpicture} [scale=0.59]
\node [n, label=below: {$\phantom{U_6}$}] (v1) at (0.5,0) {};
\node [n, label=below: {$ $}] (v2) at (2.5,0) {};
\node [n, label=left: {$ $}] (v3) at (-0.5,1.5) {};
\node [n, label=right: {$ $}] (v4) at (3.5,1.5) {};
\node [n, label=above: {$ $}] (v5) at (0.5,3) {};
\node [n, label=above: {$ $}] (v6) at (2.5,3) {};
\node [n, label=below: {$ $}] (v7) at (1.5,1.5) {};
\draw [e] (v1) -- (v2) -- (v4) -- (v6) -- (v5) -- (v3) -- (v1);
\draw[e] (v3) -- (v7) -- (v4);
\draw [e](v7) -- (v6);
\end{tikzpicture} 
\item \begin{tikzpicture} [scale=0.59]
\node [n, label=below: {$U_6$}] (v1) at (0.5,0) {};
\node [n, label=below: {$U_5$}] (v2) at (2.5,0) {};
\node [n, label=left: {$U_1$}] (v3) at (-0.5,1.5) {};
\node [n, label=right: {$U_4$}] (v4) at (3.5,1.5) {};
\node [n, label=above: {$U_2$}] (v5) at (0.5,3) {};
\node [n, label=above: {$U_3$}] (v6) at (2.5,3) {};
\node [n, label=below: {$U_7$}] (v7) at (1.5,1.5) {};
\draw [e] (v1) -- (v2) -- (v4) -- (v6) -- (v5) -- (v3) -- (v1);
\draw[e] (v3) -- (v7) -- (v4);
\draw [e](v2) -- (v6);
\end{tikzpicture}
\end{inparaenum}
\end{cclaim}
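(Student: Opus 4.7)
The plan is to prove that the third graph in the claim --- denote it $H$, with its vertex labels $U_1,\dots,U_7$ --- is minimally unlabellable; the first two graphs must be treated separately, since the introduction indicates that two of the claims quoted in Section~\ref{sec:previousresults} will be disproved and those two appear to be the natural candidates. For $H$ the argument has two parts: first, $H$ itself is unlabellable; second, each of the seven induced subgraphs $H-U_i$ on six vertices is labellable.

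For unlabellability, I would assume for contradiction that a labelling $L$ exists with $|L(U_i)|=k$ and $|L(U_i)\cap L(U_j)|=k-1$ whenever $U_iU_j$ is an edge, and begin with the triangle $U_3U_4U_5$. As with any triangle inside a Johnson graph, its three labels come in one of two standard shapes: either they share a common $(k-1)$-subset $S$, giving $L(U_i)=S\cup\{\alpha_i\}$ for distinct $\alpha_3,\alpha_4,\alpha_5$; or they share a common $(k-2)$-subset $T$, with the three $2$-subsets of some fixed three-element set $\{x,y,z\}$ completing them. Using the structural lemmas of Section~\ref{sec:fll}, I would then propagate the labelling along the two arms of the $6$-cycle, $U_3U_2U_1$ and $U_5U_6U_1$, expressing $L(U_2)$, $L(U_6)$ and $L(U_1)$ in terms of the triangle parameters together with a few swap choices. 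Finally, the path $U_1U_7U_4$ constrains $L(U_7)$ to share $k-1$ elements with each of $L(U_1)$ and $L(U_4)$ while sharing strictly fewer with each non-neighbour $U_2,U_3,U_5,U_6$; the expected outcome is that no such $L(U_7)$ exists.

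For the labellability of each $H-U_i$, I would either write down an explicit labelling by hand or verify that the subgraph contains no minimally unlabellable graph on at most six vertices (appealing to Theorem~\ref{thm:minunlab5} and the classification established in Section~\ref{sec:minunlab6}). Removing $U_7$ leaves a $6$-cycle with a single chord, whose labellability follows from the cycle construction of Theorem~\ref{thm:labfams}(ii) together with the closure properties of Theorem~\ref{thm:genconstr}; removing $U_3$ or $U_5$ destroys the triangle and similarly reduces to small cases. The four remaining deletions can each be handled directly.

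The main obstacle is the unlabellability argument. The triangle and the two cycle arms, taken one at a time, produce only partial constraints; the contradiction must emerge from the joint requirement that $L(U_7)$ agree in $k-1$ places with $L(U_1)$ and with $L(U_4)$ but disagree sufficiently with $L(U_2)$, $L(U_3)$, $L(U_5)$, $L(U_6)$. Because $U_7$ has degree only two, its label enjoys genuine freedom, and I expect the heart of the proof --- and the bulk of the case analysis --- to lie in showing that this freedom is nevertheless insufficient across all combinations of triangle type and single-element swap patterns on the two arms.
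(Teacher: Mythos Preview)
Your plan is sound and your identification of which parts need proof is correct: the paper indeed exhibits explicit labellings of graphs~(i) and~(ii), disproving those two parts of the claim, and proves only that graph~(iii) is minimally unlabellable.

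For the unlabellability of~(iii) your route differs from the paper's. You begin at the triangle $U_3U_4U_5$ and propagate outward along the two arms $U_3U_2U_1$ and $U_5U_6U_1$, saving $U_7$ for last. The paper instead starts at the other end: it applies the claw lemma (Lemma~\ref{lem:claw}) to the induced $K_{1,3}$ on $U_1,U_2,U_6,U_7$, which fixes four labels in a single normal form with no case split, and then works toward the triangle by determining $\ell(U_4)$, $\ell(U_3)$, $\ell(U_5)$ in turn via the $P_3$ lemma. The paper also exploits the reflective automorphism of the graph (swapping $U_2\leftrightarrow U_6$ and $U_3\leftrightarrow U_5$) to cut the analysis of $\ell(U_4)$ in half. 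Your triangle-first approach will work, but it incurs the two-way $\alpha/\beta$ split of Lemma~\ref{lem:k3} at the outset and then accumulates further single-swap choices along each arm before the constraints can be closed; the paper's claw-first approach avoids the initial branching and keeps the case analysis shorter.

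For minimality, your plan matches the paper's: it simply observes that each proper induced subgraph on six vertices is labellable by the classifications in Theorem~\ref{thm:minunlab5} and Claim~\ref{claim:minunlab6}, with no need for ad~hoc labellings.
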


\vspace{1em}
However, parts (i) and (ii) of Claim~\ref{claim:minunlab7} do not hold because these graphs are actually labellable: 

\begin{center}
\begin{tikzpicture} [scale=0.8]
\node [n, label=below: {$234$}] (v1) at (0.5,0) {};
\node [n, label=below: {$245$}] (v2) at (2.5,0) {};
\node [n, label=left: {$123$}] (v3) at (-0.5,1.5) {};
\node [n, label=right: {$145$}] (v4) at (3.5,1.5) {};
\node [n, label=above: {$126$}] (v5) at (0.5,3) {};
\node [n, label=above: {$146$}] (v6) at (2.5,3) {};
\node [n, label=below: {$135$}] (v7) at (1.5,1.5) {};
\draw [e] (v1) -- (v2) -- (v4) -- (v6) -- (v5) -- (v3) -- (v1);
\draw[e] (v3) -- (v7) -- (v4);
\end{tikzpicture} \hspace{0.5in}
\begin{tikzpicture} [scale=0.8]
\node [n, label=below: {$245$}] (v1) at (0.5,0) {};
\node [n, label=below: {$235$}] (v2) at (2.5,0) {};
\node [n, label=left: {$246$}] (v3) at (-0.5,1.5) {};
\node [n, label=right: {$356$}] (v4) at (3.5,1.5) {};
\node [n, label=above: {$126$}] (v5) at (0.5,3) {};
\node [n, label=above: {$136$}] (v6) at (2.5,3) {};
\node [n, label=below: {$346$}] (v7) at (1.5,1.5) {};
\draw [e] (v1) -- (v2) -- (v4) -- (v6) -- (v5) -- (v3) -- (v1);
\draw[e] (v3) -- (v7) -- (v4);
\draw [e](v7) -- (v6);
\end{tikzpicture} 
\end{center}

\noindent In view of this discrepancy, and to remove uncertainty as to which results have been established, we explicitly prove Claims~\ref{claim:Weven} and~\ref{claim:fan} in Section~\ref{sec:wheel-fan}, Claim~\ref{claim:minunlab6} in Section~\ref{sec:minunlab6}, and Claim~\ref{claim:minunlab7}~(iii) in Section~\ref{sec:claim7}.

\section{Induced subgraphs of a labellable graph}
\label{sec:fll}

In this section we derive a series of lemmas that constrain the form of the labelling of an induced subgraph of a labellable graph~$G$. These are useful either for determining a labelling of~$G$, or for proving that $G$ is unlabellable. 
In these lemmas, we use $123X$, for example, to mean the label set $\{1,2,3\} \cup X$ where $X$ is a (possibly empty) set disjoint from $\{1,2,3\}$. Vertices labelled as $123X$ and $257X$, for example, involve the same set $X$. We write $\ell(U)$ to mean the label of vertex~$U$.

Lemma~\ref{lem:p3} specifies the possible labellings of the path $P_3$ occurring as an induced subgraph of a labellable graph.

\begin{llemma}\label{lem:p3}
If the path $P_3$ occurs as an induced subgraph of a labellable graph, then the labelling of its vertices takes the form
\begin{center}
\begin{tikzpicture}[scale=0.3]
\node [n, label=above: {$u_1u_2T$}] (v1) at (-4,0) {};
\node [n, label={[yshift=-0.03cm]$\ell(U)$}] (v2) at (0,0) {};
\node [n, label=above: {$u'_1u'_2T$}] (v3) at (4,0) {};
\draw [e] (v1) -- (v2) -- (v3);
\end{tikzpicture}
\end{center}
for some set $T$ and distinct $u_1, u_2, u'_1$, $u'_2$. The four values of $\ell(U)$ consistent with this labelling are:

\begin{inparaenum}[(i)]
\vspace{0.3em}
\item $u_1u'_1T$; \hspace{0.5in}
\item $u_1u'_2T$; \hspace{0.5in}
\item $u'_1u_2T$; \hspace{0.5in}
\item $u_2u'_2T$.
\end{inparaenum}
\end{llemma}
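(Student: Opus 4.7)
Write $A$–$U$–$B$ for the three vertices of the induced $P_3$, with labels $\alpha = \ell(A)$, $\mu = \ell(U)$, $\beta = \ell(B)$, each a $k$-subset of $\{1,2,3,\dots\}$ for some $k$. The adjacency hypothesis gives $|\alpha \cap \mu| = |\mu \cap \beta| = k-1$, while the fact that the $P_3$ is \emph{induced} gives $\alpha \neq \beta$ and $|\alpha \cap \beta| \neq k-1$. The plan is to decode these three intersection conditions into a common-$T$ form for the labels.

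First I would express $\alpha = (\mu \setminus \{a\}) \cup \{a'\}$ and $\beta = (\mu \setminus \{b\}) \cup \{b'\}$ with $a,b \in \mu$ and $a',b' \notin \mu$, which is immediate from $|\alpha \cap \mu| = |\mu \cap \beta| = k-1$. The crux is then a short case check to rule out $a = b$ and to rule out $a' = b'$. Indeed, if $a = b$ and $a' = b'$ then $\alpha = \beta$; if $a = b$ and $a' \neq b'$ then $\alpha \cap \beta = \mu \setminus \{a\}$ has size $k-1$; and if $a \neq b$ and $a' = b'$ then $\alpha \cap \beta = (\mu \setminus \{a,b\}) \cup \{a'\}$ has size $(k-2)+1 = k-1$. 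Each case contradicts the induced-path condition, so we must have $a \neq b$ and $a' \neq b'$. Combined with $a,b \in \mu$ and $a',b' \notin \mu$, this shows that the four elements $a, a', b, b'$ are pairwise distinct.

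Now set $T := \mu \setminus \{a,b\}$, so that $|T| = k-2$ and
\[
\alpha = T \cup \{a', b\}, \qquad \beta = T \cup \{a, b'\}, \qquad \mu = T \cup \{a, b\}.
\]
By assigning $\{u_1,u_2\} = \{a',b\}$ and $\{u'_1,u'_2\} = \{b',a\}$, the sets $\alpha$ and $\beta$ take the prescribed forms $u_1 u_2 T$ and $u'_1 u'_2 T$ with $u_1,u_2,u'_1,u'_2$ distinct. There are four ways to make this assignment, corresponding to the two independent choices of which element of $\{a',b\}$ is called $u_1$ and which of $\{b',a\}$ is called $u'_1$. Under these four assignments, the identity $\mu = T \cup \{a,b\}$ translates successively into the four forms $u_1 u'_1 T$, $u_1 u'_2 T$, $u'_1 u_2 T$, and $u_2 u'_2 T$, giving the four values of $\ell(U)$ listed.

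The main obstacle is the case analysis in the second paragraph that forces $a \neq b$ and $a' \neq b'$; everything else is bookkeeping, and in particular the enumeration of the four cases for $\ell(U)$ is just unpacking the symmetry in how the two pairs $\{a',b\}$ and $\{b',a\}$ are ordered.
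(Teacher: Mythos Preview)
Your proof is correct and amounts to the same elementary unpacking of the intersection conditions as the paper's, just pivoted on the middle vertex $U$ rather than on the two endpoints; your case analysis ruling out $a=b$ and $a'=b'$ makes explicit what the paper compresses into the single sentence ``their labels differ in exactly two elements.'' One small wording point: your final paragraph reads as if the four forms arise from four different naming choices, whereas the lemma's intent is that for any \emph{fixed} naming $\{u_1,u_2\}=\{a',b\}$, $\{u'_1,u'_2\}=\{a,b'\}$, the label $\mu=T\cup\{a,b\}$ is one of the four listed sets (since $b\in\{u_1,u_2\}$ and $a\in\{u'_1,u'_2\}$)---but the content is there.
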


\begin{proof}
Let the vertices of the induced subgraph be 
\begin{center}
\begin{tikzpicture}[scale=0.3]
\node [n, label=above: {$U_1$}] (v1) at (-4,0) {};
\node [n, label=above: {$U_2$}] (v2) at (0,0) {};
\node [n, label=above: {$U_3$}] (v3) at (4,0) {};
\draw [e] (v1) -- (v2) -- (v3);
\end{tikzpicture}
\end{center}
Since vertices $U_1$ and $U_3$ are joined by a path of length two but are not adjacent, their labels differ in exactly two elements. We may therefore write $\ell(U_1) = u_1 u_2 T$ and $\ell(U_3) = u'_1 u'_2 T$ for some set $T$ and distinct $u_1, u_2, u'_1, u'_2$. Since the label $\ell(U_2)$ must differ from each of $u_1 u_2 T$ and $u'_1 u'_2 T$ in exactly one element, this label takes one of the four values (i) to~(iv).
\qed
\end{proof}

Lemmas~\ref{lem:k3}, \ref{lem:k4e},~\ref{lem:claw} describe the possible labellings of an induced subgraph of a labellable graph, where the induced subgraph is $K_3$, $K_4-e$, $K_{1,3}$, respectively.

\begin{llemma}\label{lem:k3}
If the complete graph $K_3$ occurs as an induced subgraph of a labellable graph, then without loss of generality and for some set $X$ its labelling is exactly one of the two graphs:

\begin{tikzpicture} [scale=0.7]
\node [n, label=above: {$12X$}] (v1) at (2,2) {};
\node [n, label=below: {$13X$}] (v2) at (0,0) {};
\node [n, label=below: {$23X$}, label={[xshift=-1.4cm, yshift=0.25cm]$\alpha$}] (v3) at (4,0) {};
\draw [e] (v1) -- (v2) -- (v3) -- (v1);
\end{tikzpicture}
\hspace{1.5in}
\begin{tikzpicture} [scale=0.7]
\node [n, label=above: {$1X$}] (v1) at (2,2) {};
\node [n, label=below: {$3X$}] (v2) at (0,0) {};
\node [n, label=below: {$2X$}, label={[xshift=-1.45cm, yshift=0.15cm]$\beta$}] (v3) at (4,0) {};
\draw [e] (v1) -- (v2) -- (v3) -- (v1);
\end{tikzpicture}
\end{llemma}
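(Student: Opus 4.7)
Denote the labels on the three vertices of the $K_3$ by $A$, $B$, $C$, each a $k$-subset of $\{1,2,3,\ldots\}$ with pairwise intersections of size $k-1$. Set $Y := A \cap B$, and let $a$ be the unique element of $A \setminus B$ and $b$ the unique element of $B \setminus A$, so that $A = \{a\} \cup Y$ and $B = \{b\} \cup Y$ with $a \neq b$. The proof then proceeds by casework on whether the element $a$ belongs to $C$; the two cases will correspond exactly to the forms $\alpha$ and $\beta$.

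In Case~1, when $a \in C$, the unique element $a' \in A \setminus C$ satisfies $a' \neq a$ and hence $a' \in Y$, giving $(A \cap B) \cap C = Y \setminus \{a'\}$ of size $k-2$. Decomposing $B \cap C = \bigl((A \cap B) \cap C\bigr) \cup (\{b\} \cap C)$ and invoking $|B \cap C| = k-1$ forces $b \in C$; counting then pins down $C = \{a, b\} \cup (Y \setminus \{a'\})$. Setting $X := Y \setminus \{a'\}$ and relabelling $a \mapsto 1$, $b \mapsto 2$, $a' \mapsto 3$ yields the form $\alpha$. In Case~2, when $a \notin C$, we have $A \setminus C = \{a\}$ and therefore $A \cap C = Y$; this gives $B \cap C \supseteq Y$ with $|B \cap C| = k-1 = |Y|$, and since $B \neq C$ we must have $b \notin C$ while $C \setminus Y = \{c\}$ for some $c \notin \{a,b\}$. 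Setting $X := Y$ and relabelling $a \mapsto 1$, $b \mapsto 2$, $c \mapsto 3$ yields the form $\beta$.

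The two cases are exclusive and exhaustive, so up to relabelling of symbols the vertex labels on the $K_3$ realise exactly one of the two prescribed forms. The only non-routine step is the forced membership $b \in C$ in Case~1, which follows cleanly by comparing $|B \cap C|$ with the size of $(A \cap B) \cap C$; every other deduction is bookkeeping on intersection sizes, closely parallel in spirit to the path argument of Lemma~\ref{lem:p3}.
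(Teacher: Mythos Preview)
Your proof is correct and follows essentially the same approach as the paper's: both set up $A=\{a\}\cup Y$, $B=\{b\}\cup Y$ and then case-split on how $C$ relates to the distinguishing elements, arriving at the forms $\alpha$ and $\beta$. The only cosmetic difference is that the paper splits on ``both $a,b\in C$'' versus ``WLOG $a\notin C$'', whereas you split on $a\in C$ versus $a\notin C$ and derive $b\in C$ in the first case via the count $|B\cap C|=k-1$; this avoids the symmetry step but is otherwise the same argument.
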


\begin{proof}
Let the vertices of the induced subgraph be 
\begin{center}
\begin{tikzpicture}[scale=0.5]
\node [n, label=above: {$U_1$}] (v1) at (2,2) {};
\node [n, label=below: {$U_3$}] (v2) at (0,0) {};
\node [n, label=below: {$U_2$}] (v3) at (4,0) {};
\draw [e] (v1) -- (v2) -- (v3) -- (v1);
\end{tikzpicture}
\end{center}
\noindent and let $\ell(U_1)=\{u_1,u_2,u_3,\dots,u_k\}$ and $\ell(U_2)=\{u'_1,u_2,u_3,\dots,u_k\}$, where the variables $u'_1,u_1,u_2,u_3,\dots,u_k$ are all distinct and $k \ge 1$. Since $U_3$ is adjacent to $U_1$ and $U_2$, if $\ell(U_3)$ contains both $u_1$ and~$u'_1$ then we may take $\ell(U_3) = \{u_1,u'_1,u_3,\dots,u_k\}$ where $k \geq 2$. The resulting graph has the form $\alpha$ with $(u_1,u_2,u'_1) = (1,2,3)$ and \mbox{$\{u_3,\dots,u_k\} = X$}.

Otherwise, we may assume that $u_1 \notin \ell(U_3)$. Since $U_3$ is adjacent to~$U_1$ and distinct from $U_2$, we have $\ell(U_3) = \{u''_1,u_2,u_3,\dots,u_k\}$ where $u''_1 \notin \{u'_1,u_1,u_2,u_3,\dots,u_k\}$. The resulting graph has the form $\beta$ with $(u_1,u'_1,u''_1) = (1,2,3)$ and $\{u_2,\dots,u_k\} = X$.
\qed
\end{proof}

\begin{llemma}\label{lem:k4e}
If $K_4-e$ occurs as an induced subgraph of a labellable graph (where $e$ is an arbitrary edge of $K_4$), then without loss of generality and for some set $X$ its labelling is
\begin{center}
\begin{tikzpicture} [scale=0.6]
\node [n, label=above: {$12X$}] (v1) at (2,2) {};
\node [n, label=left: {$13X$}] (v2) at (0,0) {};
\node [n, label=right: {$23X$}, label={[xshift=-1.2cm, yshift=0.15cm]$\alpha$}, label={[xshift=-1.2cm, yshift=-0.9cm]$\beta$}] (v3) at (4,0) {};
\node [n, label=below: {$34X$}] (v4) at (2,-2) {};
\draw [e] (v1) -- (v2) -- (v3) -- (v1);
\draw [e] (v2) -- (v4) -- (v3);
\end{tikzpicture}
\end{center}
in which the types $\alpha$, $\beta$ of the two induced $K_3$ subgraphs are as shown in~Lemma~\ref{lem:k3}.
\end{llemma}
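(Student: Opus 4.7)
The plan is to apply Lemma~\ref{lem:k3} to one of the two induced triangles of $K_4-e$ and determine the label of the fourth vertex from the adjacency and non-adjacency constraints. Label the four vertices $A, B, C, D$ so that $\{A, D\}$ is the missing edge, so that the two induced triangles are $\{A, B, C\}$ and $\{B, C, D\}$. By Lemma~\ref{lem:k3}, the triangle on $\{A, B, C\}$ is either of type $\alpha$ or of type $\beta$.

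I would handle the type-$\alpha$ case directly: set $\ell(A) = 12X$, $\ell(B) = 13X$, $\ell(C) = 23X$ for some set $X$ disjoint from $\{1, 2, 3\}$, so all labels have size $k = |X| + 2$. Since $D$ is adjacent to $B$ and both labels have size $k$, we may write $\ell(D) = (\ell(B) \setminus \{u\}) \cup \{v\}$ for some $u \in \ell(B) = \{1,3\}\cup X$ and $v \notin \ell(B)$. Imposing $|\ell(D) \cap \ell(C)| = k - 1$ and $|\ell(D) \cap \ell(A)| \ne k - 1$, and splitting on whether $u = 1$, $u = 3$, or $u \in X$: when $u = 3$ the value $v = 2$ is forced and yields $\ell(D) = 12X = \ell(A)$, which is excluded; when $u \in X$ we similarly obtain $v = 2$, but then $|\ell(D) \cap \ell(A)| = k-1$, contradicting $A \not\sim D$; and when $u = 1$ the element $v$ must lie outside $\{1, 2, 3\} \cup X$, which (after relabelling as $4$) yields $\ell(D) = 34X$. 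This reproduces the labelling asserted in the lemma; a direct check then confirms that $\{12X, 13X, 23X\}$ is of type $\alpha$ (by construction) and that $\{13X, 23X, 34X\}$ is of type $\beta$ with common set $3X$ of size $k - 1$.

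In the type-$\beta$ case for $\{A, B, C\}$, a parallel short analysis would show that $\ell(D)$ is again uniquely determined, producing a labelling whose pairwise-intersection pattern matches that of the type-$\alpha$ case; after applying the automorphism of $K_4 - e$ swapping $A$ with $D$ and a further symbol relabelling, this case collapses to the type-$\alpha$ case and gives the same form $\{12X, 13X, 23X, 34X\}$. The main obstacle is the subcase analysis for $u$ in the type-$\alpha$ case: one must enforce both the adjacency constraint (intersection size $k-1$ with $\ell(C)$) and the non-adjacency constraint (intersection size $\ne k-1$ with $\ell(A)$) simultaneously in order to rule out the two non-working subcases.
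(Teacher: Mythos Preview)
Your argument is correct, but it follows a different route from the paper's. The paper does not start from Lemma~\ref{lem:k3}; instead it applies Lemma~\ref{lem:p3} to the induced path $U_1\text{--}U_2\text{--}U_3$ through the \emph{missing} edge, which pins down the labels of three of the four vertices in a single canonical form with no case split. A second appeal to Lemma~\ref{lem:p3} (for the path $U_1\text{--}U_4\text{--}U_3$) then lists four candidates for $\ell(U_4)$, and adjacency to $U_2$ eliminates two of them; the surviving two are related by the reflection $U_1\leftrightarrow U_3$. By contrast, your use of Lemma~\ref{lem:k3} on a triangle forces the $\alpha/\beta$ dichotomy at the outset and hence doubles the casework. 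Both routes land on the same labelling; the paper's choice to work along the non-edge is what sidesteps the extra case.

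One small point to tighten in your type-$\beta$ sketch: $\ell(D)$ is not literally uniquely determined there. With $\ell(A)=1X'$, $\ell(B)=2X'$, $\ell(C)=3X'$, the constraints force $\ell(D)=23(X'\setminus\{u\})$ for \emph{some} $u\in X'$, and when $|X'|\ge 2$ there are several such labels. What is true, and what you need, is that every such choice has the same shape up to symbol relabelling: writing $X=X'\setminus\{u\}$ and sending $(1,2,3,u)\mapsto(4,1,2,3)$ gives exactly $\{12X,13X,23X,34X\}$ with $D$ at the top. So replace ``uniquely determined'' by ``determined up to relabelling'' and the $\beta$ case goes through.
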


\begin{proof}
Let the vertices of the induced subgraph be
\begin{center}
\begin{tikzpicture}[scale=0.5]
\node [n, label=above: {$U_1$}] (v1) at (2,2) {};
\node [n, label=left: {$U_2$}] (v2) at (0,0) {};
\node [n, label=right: {$U_4$}] (v3) at (4,0) {};
\node [n, label=below: {$U_3$}] (v4) at (2,-2) {};
\draw [e] (v2) -- (v1) -- (v3) -- (v2) -- (v4) -- (v3);
\end{tikzpicture}
\end{center}
By Lemma~\ref{lem:p3}, we may take 
$\ell(U_1) = \{u_1,u_2,u_3,\dots,u_k\}$, 
$\ell(U_2) = \{u'_1,u_2,u_3,\dots,u_k\}$, 
$\ell(U_3) = \{u'_1,u'_2,u_3,\dots,u_k\}$, 
where $u'_1,u'_2,u_1,u_2,u_3\dots,u_k$ are all distinct and $k \geq 2$.
Since $U_4$ is adjacent to both $U_1$ and $U_3$, its possible labellings are determined by Lemma~\ref{lem:p3} with $T = \{u_3,\dots,u_k\}$; because $U_4$ is distinct from and adjacent to $U_2$, only cases (i) and (iv) of Lemma~\ref{lem:p3} can occur. 

In case (i), we have $\ell(U_4)=\{u_1,u'_1,u_3,\dots,u_k\}$. The resulting graph has the given form with $(u_1,u_2,u'_1,u'_2) = (2,1,3,4)$ and $\{u_3,\dots,u_k\} = X$.

In case (iv), we have $\ell(U_4)=\{u_2,u'_2,u_3,\dots,u_k\}$. The resulting graph (after reflection through a horizontal axis) has the given form with $(u_1,u_2,u'_1,u'_2) = (4,3,1,2)$ and $\{u_3,\dots,u_k\} = X$.

In both cases, the types $\alpha$, $\beta$ of the two induced $K_3$ subgraphs are as depicted in~Lemma~\ref{lem:k3}.
\qed
\end{proof}

\begin{llemma}\label{lem:claw}
If the complete bipartite graph $K_{1,3}$ occurs as an induced subgraph of a labellable graph, then without loss of generality and for some set $X$ its labelling is
\begin{center}
\begin{tikzpicture}[scale=0.3]
\node [n, label=above: {$123X$}] (v1) at (3,3) {};
\node [n, label=below: {$234X$}] (v2) at (-1,0) {};
\node [n, label=below: {$135X$}] (v3) at (3,0) {};
\node [n, label=below: {$126X$}] (v4) at (7,0) {};
\draw [e] (v2) -- (v1) -- (v3);
\draw [e] (v1) -- (v4);
\end{tikzpicture}
\end{center}
\end{llemma}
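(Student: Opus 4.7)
The plan is to let $U$ denote the center of the claw and $V_1, V_2, V_3$ its three leaves. Since each $V_i$ is adjacent to $U$, the labels $\ell(V_i)$ and $\ell(U)$ intersect in a set of size $k-1$, so I may write $\ell(V_i) = (\ell(U) \setminus \{a_i\}) \cup \{b_i\}$ for some $a_i \in \ell(U)$ and $b_i \notin \ell(U)$. The main work is then to show that $a_1, a_2, a_3$ are pairwise distinct and that $b_1, b_2, b_3$ are pairwise distinct; once these are established, a relabelling of symbols recovers the stated form.

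For the distinctness of the $a_i$: suppose $a_i = a_j$ for some $i \neq j$. Then $\ell(V_i)$ and $\ell(V_j)$ agree on the $(k-1)$-subset $\ell(U) \setminus \{a_i\}$. Either $b_i = b_j$, forcing $\ell(V_i) = \ell(V_j)$ and contradicting the fact that $V_i$ and $V_j$ are distinct vertices of the labelled graph, or $b_i \neq b_j$, giving $|\ell(V_i) \cap \ell(V_j)| = k-1$, which makes $V_i$ and $V_j$ adjacent, contradicting the induced $K_{1,3}$ structure. For the distinctness of the $b_i$: if instead $a_i \neq a_j$ but $b_i = b_j$, then $\ell(V_i)$ and $\ell(V_j)$ share the set $(\ell(U) \setminus \{a_i, a_j\}) \cup \{b_i\}$ of size $k-1$, once again forcing adjacency of $V_i$ and $V_j$, a contradiction.

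With both distinctness claims in hand, I would rename $a_1, a_2, a_3$ as $1, 2, 3$ and $b_1, b_2, b_3$ as $4, 5, 6$, and set $X = \ell(U) \setminus \{a_1, a_2, a_3\}$. Then $\ell(U) = 123X$, $\ell(V_1) = 234X$, $\ell(V_2) = 135X$, and $\ell(V_3) = 126X$, matching the labelling in the lemma statement. I do not anticipate a significant obstacle: the entire argument is a short case analysis forced by the non-adjacency of the leaves in an induced $K_{1,3}$, followed by a routine relabelling of symbols.
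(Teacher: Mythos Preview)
Your proof is correct and arguably cleaner than the paper's. The paper proceeds sequentially: it invokes Lemma~\ref{lem:p3} on the induced $P_3$ formed by two of the leaves and the centre to fix those three labels, and then separately analyses the third leaf using its adjacency with the centre and non-adjacency with the other two leaves. Your approach is instead fully symmetric in the three leaves from the outset: you write each leaf label as $(\ell(U)\setminus\{a_i\})\cup\{b_i\}$ and reduce the problem to showing that the $a_i$ and the $b_i$ are each pairwise distinct, which you do by a short case analysis on non-adjacency. Your route is self-contained and does not rely on Lemma~\ref{lem:p3}; the paper's route has the advantage of fitting into the common framework used for the other lemmas of Section~\ref{sec:fll}, where reusing the $P_3$ analysis streamlines the presentation as a whole.
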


\begin{proof}
Let the vertices of the induced subgraph be
\begin{center}
\begin{tikzpicture}[scale=0.3]
\node [n, label=above: {$U_1$}] (v1) at (3,3) {};
\node [n, label=below: {$U_2$}] (v2) at (-1,0) {};
\node [n, label=below: {$U_3$}] (v3) at (3,0) {};
\node [n, label=below: {$U_4$}] (v4) at (7,0) {};
\draw [e] (v2) -- (v1) -- (v3);
\draw [e] (v1) -- (v4);
\end{tikzpicture}
\end{center}
By Lemma~\ref{lem:p3}, we may take $\ell(U_2) = \{u_1,u_2,u_3,\dots,u_k\}$, $\ell(U_1) = \{u'_1,u_2,u_3,\dots,u_k\}$, $\ell(U_3) = \{u'_1,u'_2,u_3,\dots,u_k\}$, where $u'_1,u'_2,u_1,u_2,u_3,\dots,u_k$ are all distinct and $k \geq 2$.
Since $U_4$ is adjacent to $U_1$ but not to $U_2$ and not to $U_3$, we may take $\ell(U_4)=\{u'_1,u_2,u'_3,u_4,\dots,u_k\}$ where $u'_3 \notin \{u'_1,u'_2,u_1,u_2,u_3,u_4\dots,u_k\}$ and $k \geq 3$. The resulting graph has the given form with $(u_1,u_2,u_3,u'_1,u'_2,u'_3) = (4,2,3,1,5,6)$ and with $\{u_4,\dots,u_k\} = X$.
\qed
\end{proof}

\section{Wheel graphs and fan graphs}
\label{sec:wheel-fan}

In this section we firstly determine the values of $n \ge 4$ for which the wheel graph $W_n$ is labellable, and show that for all other $n$ it is minimally unlabellable. 
We then determine the pairs $(m,n)$ for which the fan graph $F_{m,n}$ is labellable, minimally unlabellable, and unlabellable (not minimally).

The proof of Theorem~\ref{thm:wheel}~(i) is illustrated for $W_9$ in Figure~\ref{fig:oddwheellabel}. 
\begin{theorem} 
\label{thm:wheel}
\mbox{}
\begin{enumerate}
\item[(i)]
The wheel graph $W_n$ is labellable for $n=4$ and for odd $n \ge 5$.

\item[(ii)]
The wheel graph $W_n$ is minimally unlabellable for even $n \ge 6$.
\end{enumerate}
\end{theorem}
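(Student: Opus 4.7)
My plan is to split the theorem into its two parts and use Lemma~\ref{lem:k4e} as the main driver. Throughout, write $h$ for the hub of $W_n$ and $v_1, \ldots, v_{n-1}$ for the cycle vertices in cyclic order.

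For part~(i), the case $n = 4$ is immediate since $W_4 = K_4$ is labellable by Theorem~\ref{thm:labfams}~(i). For odd $n \ge 5$, set $m = n-1$, which is even, and I would give an explicit labelling. Take $\ell(h) = \{1, 2, \ldots, m/2\}$; each $\ell(v_i)$ will be obtained from $\ell(h)$ by removing an element $a_i \in \ell(h)$ and inserting an element $b_i \notin \ell(h)$, so that $h$ is automatically adjacent to each $v_i$. Moving once around the cycle, I alternate between holding $a$ constant and holding $b$ constant on consecutive edges: set $a_{2j-1} = a_{2j} = j$ for $j = 1, \ldots, m/2$, and pick $b_i$ so that each of the $m/2$ values outside $\ell(h)$ is used at two cyclically consecutive positions, shifted by one relative to the $a$-pattern so that the wraparound edge $v_m v_1$ also shares a $b$-value. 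A routine verification then shows that consecutive cycle vertices share exactly one coordinate and non-consecutive ones share none, giving precisely the edge set of $W_n$.

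For part~(ii), I treat unlabellability and minimality in turn. Assume $n \ge 6$ is even, so $m = n-1 \ge 5$ is odd, and suppose for contradiction that $W_n$ admits a labelling. Each triangle $h v_i v_{i+1}$ (indices modulo $m$) is labelled in one of the two forms $\alpha$, $\beta$ described in Lemma~\ref{lem:k3}. Since $m \ge 4$, vertices $v_{i-1}$ and $v_{i+1}$ are non-adjacent on the cycle, so the four vertices $h, v_{i-1}, v_i, v_{i+1}$ induce a $K_4 - e$; Lemma~\ref{lem:k4e} then forces the two triangles sharing the spoke $hv_i$ to be of opposite types. Consequently the $m$ triangle types give a proper $2$-colouring of a cycle of length $m$, which is impossible when $m$ is odd, establishing Claim~\ref{claim:Weven}. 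For minimality it suffices to show that $W_n - v$ is labellable for every single vertex $v$. When $v = h$, $W_n - h = C_{n-1}$ is labellable by Theorem~\ref{thm:labfams}~(ii); when $v$ is a cycle vertex, $W_n - v$ is the fan graph $F_{1,n-2}$, which I would label using the template from part~(i) applied to an open path instead of a closed cycle (with $\lceil (n-2)/2 \rceil$ hub elements), so that no wraparound parity constraint arises.

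The main obstacle is the explicit construction in part~(i): the pairs $(a_i, b_i)$ must be arranged so that consecutive cycle vertices share exactly one coordinate while non-consecutive ones share neither. Non-adjacency forces each repeated $a$- or $b$-value to occupy exactly two cyclically consecutive positions, and closing the cycle consistently requires $m$ to be even; the same template, now without wraparound, supplies the labelling of $F_{1,n-2}$ in the minimality step. The remainder of the argument is essentially forced by Lemma~\ref{lem:k4e} together with the odd-cycle parity observation.
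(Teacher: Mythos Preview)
Your proposal is correct and follows essentially the same approach as the paper: the explicit hub-plus-swap labelling in part~(i) matches the paper's construction, and the alternating $\alpha/\beta$ parity argument via Lemma~\ref{lem:k4e} in part~(ii) is identical. The only cosmetic difference is in the minimality step: where you label $F_{1,n-2}$ directly by truncating the cycle template, the paper observes equivalently that $W_{2m}-v_i$ is an induced subgraph of $W_{2m+1}$ and invokes Observation~\ref{obs:induced} together with part~(i).
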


\begin{proof}
\mbox{}
\begin{enumerate}
\item[(i)]
$W_4$ is isomorphic to $K_4$, which is labellable by Theorem~\ref{thm:labfams}~(i). 

To show that $W_{2m+1}$ is labellable for each integer $m \ge 2$,
form $W_{2m+1}$ by joining a vertex~$v$ to every vertex of a cycle on vertices $v_1, v_2, \dots, v_{2m}$.
Assign the label $[m] := \{1,2,\dots,m\}$ to vertex $v$ and, for $1 \le i \le m$, 
assign the label $[m] \setminus \{i\} \cup \{m+i\}$ to $v_{2i-1}$ 
and the label $[m] \setminus \{i\} \cup \{m+1+(i \bmod m)\}$ to $v_{2i}$.

\item[(ii)]
Let $m \ge 3$ and suppose, for a contradiction, that $W_{2m}$ is labellable. 
Form $W_{2m}$ by joining a vertex~$v$ to every vertex of a cycle on vertices $v_1, v_2, \dots, v_{2m-1}$.
Each of the $2m-1$ triples of vertices
\begin{equation}\label{eqn:cyclic-seq}
\{v,v_1,v_2\}, \{v,v_2,v_3\},\dots, \{v,v_{2m-2},v_{2m-1}\}, \{v,v_{2m-1},v_1\}
\end{equation}
then induces a subgraph $K_3$ in $W_{2m}$, and the labelling of each of these induced subgraphs has exactly one of the two types $\alpha$, $\beta$ specified in Lemma~\ref{lem:k3}.
Since $m \ge 3$, the vertices of two adjacent triples in the list~\eqref{eqn:cyclic-seq} (viewed as a cyclic sequence) induce a subgraph of the form $K_4-e$, and moreover by Lemma~\ref{lem:k4e} each such induced subgraph $K_4-e$ comprises one induced subgraph $K_3$ of type $\alpha$ and one of type~$\beta$. 
Therefore the types of the induced subgraphs $K_3$ resulting from the cyclic sequence of $2m-1$ triples \eqref{eqn:cyclic-seq} alternate between $\alpha$ and $\beta$, which is a contradiction.

We conclude that $W_{2m}$ is unlabellable. To show that $W_{2m}$ is minimally unlabellable, by Observation~\ref{obs:induced} it is sufficient to show that all subgraphs obtained by removing a single vertex of $W_{2m}$ are labellable. 
The graph $W_{2m}-v$ is the cycle graph $C_{2m-1}$, which is labellable by Theorem~\ref{thm:labfams}~(ii); the graph $W_{2m}-v_i$ is an induced subgraph of $W_{2m+1}$, and so is labellable by applying Observation~\ref{obs:induced} to the result of part~(i). 
\qed
\end{enumerate}
\end{proof}

\begin{figure}
\begin{center}
\begin{tikzpicture}[scale=1.0]
\node [n, label={[xshift=-0.2cm]$v_1 = \{2,3,4,5\}$}] (v1) at (-0.9,2) {};
\node [n, label={[xshift=0.2cm]$v_2 = \{2,3,4,6\}$}] (v2) at (0.9,2) {};
\node [n, label=right: {$v_3 = \{1,3,4,6\}$}] (v3) at (2,0.9) {};
\node [n, label=right: {$v_4 = \{1,3,4,7\}$}] (v4) at (2,-0.9) {};
\node [n, label={[xshift=0.2cm,yshift=-0.75cm]$v_5 = \{1,2,4,7\}$}] (v5) at (0.9,-2) {};
\node [n, label={[xshift=-0.2cm,yshift=-0.75cm]$v_6 = \{1,2,4,8\}$}] (v6) at (-0.9,-2) {};
\node [n, label=left: {$v_7 = \{1,2,3,8\}$}] (v7) at (-2,-0.9) {};
\node [n, label=left: {$v_8 = \{1,2,3,5\}$}] (v8) at (-2,0.9) {};
\node [n, label={[xshift=1.15cm,yshift=-0.35cm]$v = \{1,2,3,4\}$}] (v9) at (0,0) {};
\draw [e] (v1) -- (v2) -- (v3) -- (v4) -- (v5) -- (v6) -- (v7) -- (v8) -- (v1) -- (v9) -- (v2);
\draw [e] (v3) -- (v9) -- (v4);
\draw [e] (v5) -- (v9) -- (v6);
\draw [e] (v7) -- (v9) -- (v8);
\end{tikzpicture}
\caption{A labelling of the wheel graph $W_9$ according to the proof of Theorem~\ref{thm:wheel}~(i).}
\label{fig:oddwheellabel}
\end{center}
\end{figure}
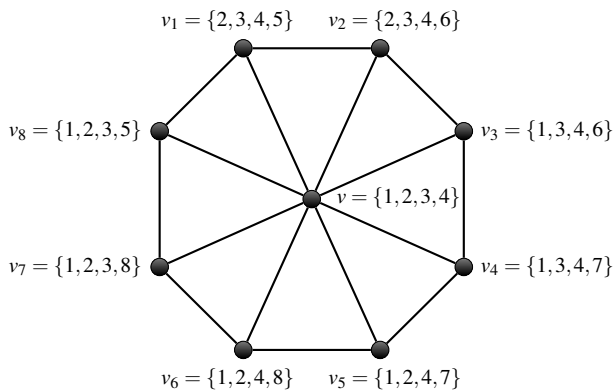

Theorem~\ref{thm:wheel}~(ii) provides an infinite family of minimally unlabellable graphs. In particular, it establishes Claim~\ref{claim:Weven}.

\begin{theorem} 
\label{thm:fan}
\mbox{}
\begin{enumerate}
\item[(i)]
The fan graphs $F_{2,2}$, $F_{2,3}$, $F_{m,1}$ and $F_{1,n}$ are labellable for all $m, n \ge 1$.
\item[(ii)]
The fan graph $F_{3,2}$ is minimally unlabellable.
\item[(iii)]
The fan graph $F_{m,n}$ is unlabellable (not minimally) for all $(m,n)$ not specified in (i) and (ii).
\end{enumerate}
\end{theorem}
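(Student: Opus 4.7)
The plan is to address the three parts in order. For part (i), the graph $F_{m,1}$ is the star $K_{1,m}$, a tree, so it is labellable by Theorem~\ref{thm:labfams}~(iii). The graph $F_{2,2}$ is $K_4$ with the edge between its two $u$-vertices removed, so a labelling of the required form is provided by Lemma~\ref{lem:k4e}; explicitly, $\ell(u_1) = \{1,2\}$, $\ell(w_1) = \{1,3\}$, $\ell(w_2) = \{2,3\}$, $\ell(u_2) = \{3,4\}$. For $F_{2,3}$ I would adjoin the label $\ell(w_3) = \{2,4\}$ and verify the six new pairwise intersection sizes by inspection. For $F_{1,n}$, I propose the explicit labelling with $k=n$ given by $\ell(u) = [n]$ and $\ell(w_i) = ([n] \setminus \{\lceil i/2 \rceil\}) \cup \{b_i\}$, where the $b_i$'s are new symbols (not in $[n]$) satisfying $b_{2k} = b_{2k+1}$ for each $k \ge 1$ with $2k+1 \le n$, but are otherwise pairwise distinct.

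For part (ii), I would observe that $F_{3,2}$ is isomorphic to graph (ii) of Theorem~\ref{thm:minunlab5}, namely the graph obtained from $K_{2,3}$ by joining its two vertices of degree $3$; the $u$-vertices form the independent $3$-side, the $w$-vertices the adjacent $2$-side. Consequently $F_{3,2}$ is minimally unlabellable.

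For part (iii), the strategy is to exhibit, for each remaining $F_{m,n}$, a proper induced subgraph that is already unlabellable; Observation~\ref{obs:induced} then forces $F_{m,n}$ to be unlabellable, while properness prevents it from being minimally so. For $m \ge 3$ and $n \ge 2$ with $(m,n) \ne (3,2)$, any three $u$-vertices together with any two consecutive $w$-vertices induce a copy of $F_{3,2}$; since $m + n \ge 6$ in all such remaining cases, this copy is proper. For $m = 2$ and $n \ge 4$, I claim that in $F_{2,n}$ the vertex set $\{u_1, u_2, w_1, w_2, w_4\}$ induces $K_{2,3}$ (with parts $\{u_1, u_2\}$ and $\{w_1, w_2, w_4\}$) together with the single extra edge $w_1 w_2$, which is precisely graph (iii) of Theorem~\ref{thm:minunlab5}; hence $F_{2,n}$ is unlabellable, and the copy is proper because it has five vertices.

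The main obstacle is the verification of the $F_{1,n}$ labelling. Writing $a_i = \lceil i/2 \rceil$, the identity
\[
|\ell(w_i) \cap \ell(w_j)| \;=\; \bigl(n - |\{a_i,a_j\}|\bigr) + [b_i = b_j]
\]
shows that $w_i$ and $w_j$ are adjacent (intersection of size $n-1$) if and only if exactly one of the equalities $a_i = a_j$ and $b_i = b_j$ holds. The choice $a_i = \lceil i/2 \rceil$ ensures that $a_i = a_j$ occurs exactly when $\{i,j\} = \{2k-1, 2k\}$, so every non-consecutive pair automatically has $a_i \ne a_j$, and the prescribed $b$-pattern then delivers the required adjacencies ($b_{2k} = b_{2k+1}$ for the consecutive pairs of differing $a$) and non-adjacencies ($b_i \ne b_j$ for all non-consecutive pairs).
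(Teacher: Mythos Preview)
Your proof is correct and follows essentially the same route as the paper. The only cosmetic differences are: for $F_{m,1}$ you invoke Theorem~\ref{thm:labfams}~(iii) (trees) where the paper writes down the star labelling explicitly; and for $F_{1,n}$ you give a direct labelling where the paper instead observes that $F_{1,n}$ is an induced subgraph of $W_{2n+1}$ and appeals to Theorem~\ref{thm:wheel}~(i). Your $a_i=\lceil i/2\rceil$, $b_{2k}=b_{2k+1}$ scheme is in fact exactly the restriction of the paper's wheel labelling to a path of consecutive rim vertices, so the two arguments coincide in content. Your treatment of part~(iii) is likewise the same as the paper's (the paper phrases it via the monotonicity $F_{m,n}\subset F_{m+1,n},\,F_{m,n+1}$ after exhibiting the forbidden subgraph inside $F_{2,4}$, which amounts to your two-case split).
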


\begin{proof}
\begin{enumerate}
\item[(i)]
The graphs $F_{2,2}$ and $F_{2,3}$ are labellable:
\begin{center}
\begin{tikzpicture} [scale=0.6]
\node [n, label=above: {$12$}] (v1) at (2,2) {};
\node [n, label=left: {$13$}] (v2) at (0,0) {};
\node [n, label=right: {$23$}] (v3) at (4,0) {};
\node [n, label=below: {$34$}] (v4) at (2,-2) {};
\draw [e] (v1) -- (v2) -- (v3) -- (v1);
\draw [e] (v2) -- (v4) -- (v3);

\node [n, label=above: {$12$}] (v12) at (10,2) {};
\node [n, label=left: {$24$}] (v24) at (8,0) {};
\node [n, label={[xshift=0.25cm,yshift=0cm]$23$}] (v23) at (10,0) {};
\node [n, label=right: {$13$}] (v13) at (12,0) {};
\node [n, label=below: {$34$}] (v34) at (10,-2) {};
\draw [e] (v24) -- (v12) -- (v13) -- (v34) -- (v24) -- (v23) -- (v13);
\draw [e] (v12) -- (v23) -- (v34);
\end{tikzpicture}
\end{center}

We next show that $F_{m,1}$ is labellable for all $m \ge 1$.
Form $F_{m,1}$ by joining $m$ isolated vertices $v_1, v_2, \dots, v_m$ to a single vertex $v$.
Assign the label $[m] := \{1,2,\dots,m\}$ to vertex $v$ and, for $1 \le i \le m$, 
assign the label $[m] \setminus \{i\} \cup \{m+i\}$ to $v_i$.

It remains to show that $F_{1,n}$ is labellable for all $n \ge 2$.
Since $F_{1,n}$ is an induced subgraph of $W_{2n+1}$ for $n \ge 2$, this follows by applying Observation~\ref{obs:induced} to Theorem~\ref{thm:wheel}~(i).

\item[(ii)]
The graph $F_{3,2}$ is the graph (ii) of Theorem~\ref{thm:minunlab5}, which is minimally unlabellable.

\item[(iii)]
By Observation~\ref{obs:induced}, the graph $F_{2,4}$ is unlabellable (not minimally) because it contains graph (iii) of Theorem~\ref{thm:minunlab5} as a proper induced subgraph:

\begin{center}
\begin{tikzpicture}[scale=0.8]
\node [n, label=below: {$Y_2$}] (v1) at (0,0) {};
\node [n, label=left: {$Y_3$}] (v2) at (-2.5,1.5) {};
\node [n, label= {[xshift=-0.3cm, yshift=-0.2cm]$Y_4$}] (v3) at (-1,1.5) {};
\node [n, label={$ $}] (v4) at (1,1.5) {};
\node [n, label=right: {$Y_5$}] (v5) at (2.5,1.5) {};
\node [n, label=above: {$Y_1$}] (v6) at (0,3) {};
\draw [e] (v1) -- (v5) -- (v6) -- (v2) -- (v1) -- (v3) -- (v6) -- (v4) -- (v1);
\draw [e] (v2) -- (v3) -- (v4) -- (v5);
\end{tikzpicture}
\end{center}

The result follows from the observation that the fan graph $F_{m,n}$ is a proper induced subgraph of $F_{m+1,n}$ and of $F_{m,n+1}$.
\qed
\end{enumerate}
\end{proof}

In particular, Theorem~\ref{thm:fan} establishes Claim~\ref{claim:fan}.

\section{Minimally unlabellable graphs on at most five vertices} 
\label{sec:minunlab5}

Theorem~\ref{thm:minunlab5} specifies that there are exactly four minimally unlabellable graphs on at most five vertices.
In this section we use the results of Section~\ref{sec:fll} to verify briefly that these four graphs are indeed unlabellable. 
Appendix~A demonstrates explicitly that all the other 27 connected graphs on at most five vertices are labellable, which by Theorem~\ref{thm:genconstr}~(i) then implies Theorem~\ref{thm:minunlab5}.

Consider each of the graphs (i) to (iv) in Theorem~\ref{thm:minunlab5} in turn, and suppose for a contradiction that the graph is labellable. 
\begin{enumerate}[(i)]
\item By Lemma~\ref{lem:claw} applied to the subgraph induced by vertices $U_1, U_3, U_4$, $U_5$, we may assign labels
\begin{center}
\begin{tikzpicture}[scale=1.0]
\node [n, label=below: {$126X$}] (v1) at (0,0) {};
\node [n, label=left: {$123X$}] (v2) at (-1.5,1.5) {};
\node [n, label=below: {$135X$}] (v3) at (0,1.5) {};
\node [n, label=right: {$\ell(U_2)$}] (v4) at (1.5,1.5) {};
\node [n, label=above: {$234X$}] (v5) at (0,3) {};
\draw [e] (v1) -- (v2) -- (v5) -- (v4) -- (v1);
\draw [e] (v2) -- (v3) -- (v4);
\end{tikzpicture}
\end{center}
for some set~$X$. Apply Lemma~\ref{lem:p3} with $T= \{3\} \cup X$ to the induced path $P_3$ on the vertices lablled $234X$, $\ell(U_2)$, $135X$.
None of the cases (i) to (iv) of Lemma~\ref{lem:p3} is consistent with the condition that $\ell(U_2)$ should be distinct from $123X$ and differ from $126X$ in exactly one element.

\item By Lemma~\ref{lem:k4e} applied to the subgraph induced by vertices $V_1, V_2, V_3$, $V_4$, we may assign labels
\begin{center}
\begin{tikzpicture}[scale=1.2]
\node [n, label=below: {$23X$}] (v1) at (0,0) {};
\node [n, label=left: {$12X$}] (v2) at (-1,1) {};
\node [n, label=right: {$34X$}] (v3) at (1,1) {};
\node [n, label=above: {$13X$}] (v4) at (0,2) {};
\node [n, label=right: {$\ell(V_5)$}] (v5) at (2.5,1) {};
\draw [e] (v1) -- (v2) -- (v4) -- (v3) -- (v1) -- (v4) -- (v5) -- (v1);
\end{tikzpicture}
\end{center}
for some set $X$. 
Apply Lemma~\ref{lem:k3} to the induced subgraph $K_3$ on the vertices labelled $13X$, $\ell(V_5)$, $23X$. Neither of the outcomes in Lemma~\ref{lem:k3} is consistent with the condition that $\ell(V_5)$ should be distinct from $12X$ and differ from $34X$ in more than one element.

\item By Lemma~\ref{lem:k4e} applied to the subgraph induced by vertices $Y_1, Y_2, Y_3$, $Y_4$, we may assign labels
\begin{center}
\begin{tikzpicture}[scale=0.85]
\node [n, label=below: {$34X$}] (v1) at (0,0) {};
\node [n, label=left: {$13X$}] (v2) at (-1.5,1.5) {};
\node [n, label=right: {$23X$}] (v3) at (0,1.5) {};
\node [n, label=right: {$\ell(Y_5)$}] (v4) at (1.5,1.5) {};
\node [n, label=above: {$12X$}] (v5) at (0,3) {};
\draw [e] (v1) -- (v3) -- (v5) -- (v4) -- (v1) -- (v2) -- (v5);
\draw [e] (v2) -- (v3);
\end{tikzpicture}
\end{center}
for some set $X$. 
Apply Lemma~\ref{lem:p3} to the induced path $P_3$ on the vertices labelled $12X$, $\ell(Y_5)$, $34X$.
None of the cases (i) to (iv) of Lemma~\ref{lem:p3} is consistent with the condition that $\ell(Y_5)$ should differ from both $13X$ and~$23X$ in more than one element.

\item By Lemma~\ref{lem:k4e} applied to the subgraph induced by vertices $Z_1, Z_2, Z_3$, $Z_5$, may assign labels
\begin{center}
\begin{tikzpicture}[scale=1.15]
\node [n, label=left: {$23X$}] (v1) at (-1,0.1) {};
\node [n, label=right: {$\ell(Z_4)$}] (v2) at (1,0.1) {};
\node [n, label=above: {$13X$}] (v3) at (0,1) {};
\node [n, label=below: {$12X$}] (v4) at (-0.6,-1) {};
\node [n, label=below: {$34X$}] (v5) at (0.6,-1) {};
\draw [e] (v5) -- (v2) -- (v4) -- (v3) -- (v5) -- (v1) -- (v2) -- (v3) -- (v1) --(v4);
\end{tikzpicture}
\end{center}
for some set $X$. 
Apply Lemma~\ref{lem:p3} to the induced path $P_3$ on the vertices labelled $12X$, $\ell(Z_4)$, $34X$.
None of the cases (i) to (iv) of Lemma~\ref{lem:p3} is consistent with the condition that $\ell(Z_4)$ should differ from both $13X$ and~$23X$ in exactly one element.
\end{enumerate}

\section{Minimally unlabellable graphs on six vertices} 
\label{sec:minunlab6}

Claim~\ref{claim:minunlab6} states that there are exactly four minimally unlabellable graphs on six vertices.
In this section we use the results of Section~\ref{sec:fll} to prove that these four graphs are indeed unlabellable. 
It follows that these graphs are minimally unlabellable: each of their proper induced subgraphs is labellable, by Theorem~\ref{thm:minunlab5}.
Appendix~B demonstrates explicitly that of the other 108 connected graphs on six vertices,
69 are labellable and 39 contain as a proper induced subgraph some unlabellable five-vertex graph specified in Theorem~\ref{thm:minunlab5}. Together with Theorem~\ref{thm:genconstr}~(i), this proves Claim~\ref{claim:minunlab6}.

Graph (iv) in Claim~\ref{claim:minunlab6} is $W_6$, which is unlabellable by Theorem~\ref{thm:wheel}~(ii).
Consider each of the other graphs (i) to (iii) in Claim~\ref{claim:minunlab6} in turn, and suppose for a contradiction that the graph is labellable. 
\begin{enumerate}[(i)]
\item By Lemma~\ref{lem:claw} applied to $U_1, U_2, U_3$, $U_6$, we may assign labels
\begin{center}
\begin{tikzpicture}[scale=0.9]
\node [n, label=below: {$234X$}] (v1) at (0,0) {};
\node [n, label=left: {$123X$}] (v2) at (-2.5,1.5) {};
\node [n, label=below: {$135X$}] (v3) at (-1,1.5) {};
\node [n, label=below: {$\ell(U_4)$}] (v4) at (1,1.5) {};
\node [n, label=right: {$\ell(U_5)$}] (v5) at (2.5,1.5) {};
\node [n, label=above: {$126X$}] (v6) at (0,3) {};
\draw [e] (v1) -- (v2) -- (v3) -- (v4) -- (v5) -- (v6) -- (v2);
\draw [e] (v1) -- (v5);
\end{tikzpicture}
\end{center}
for some set $X$. Apply Lemma~\ref{lem:p3} with $T=\{2\} \cup X$ to the induced path $P_3$ on the vertices labelled $126X$, $\ell(U_5)$, $234X$.
The only case of Lemma~\ref{lem:p3} that is consistent with the condition that $\ell(U_5)$ should differ from $123X$ in more than one element occurs when $\ell(U_5) = 246X$. But then the vertices labelled $135X$ and $246X$ are joined by a path of length two but their labels differ in three elements, which is a contradiction.

\item By Lemma~\ref{lem:k4e} applied to $U_1, U_2, U_3$, $U_4$, we may assign labels
\begin{center}
\begin{tikzpicture}[scale=1.2]
\node [n, label=below: {$34X$}] (v1) at (0,0) {};
\node [n, label=left: {$13X$}] (v2) at (-1,1) {};
\node [n, label={[xshift=0.6cm, yshift=-0.4cm]$23X$}] (v3) at (1,1) {};
\node [n, label=above: {$12X$}] (v4) at (0,2) {};
\node [n, label=above: {$\ell(U_5)$}] (v5) at (2,2) {};
\node [n, label=below: {$\ell(U_6)$}] (v6) at (2,0) {};
\draw [e] (v1) -- (v2) -- (v4) -- (v3) -- (v1) -- (v6) -- (v5) -- (v4);
\draw [e] (v2) -- (v3);
\end{tikzpicture}
\end{center}
for some set $X$. By applying Lemma~\ref{lem:p3} to the induced path $P_3$ on the vertices labelled $23X$, $34X$, $\ell(U_6)$, we may take $\ell(U_6)=45X$. 
Then apply Lemma~\ref{lem:p3} to the induced path $P_3$ on the vertices labelled $12X$, $\ell(U_5)$, $45X$. 
None of the cases (i) to (iv) of Lemma~\ref{lem:p3} is consistent with the condition that $\ell(U_5)$ should differ from both $13X$ and~$23X$ in more than one element.

\item By Lemma~\ref{lem:claw} applied to $U_1, U_2, U_3$, $U_4$, we may assign labels
\begin{center}
\begin{tikzpicture}[scale=0.8]
\node [n, label=above: {$123X$}] (v1) at (2,1.5) {};
\node [n, label=left: {$234X$}] (v2) at (0,0) {};
\node [n, label=right: {$126X$}] (v3) at (4,0) {};
\node [n, label={[xshift=0.5cm,yshift=-0.1cm]$135X$}] (v4) at (2,-0.5) {};
\node [n, label=below: {$\ell(U_5)$}] (v5) at (1,-2) {};
\node [n, label=below: {$\ell(U_6)$}] (v6) at (3,-2) {};
\draw [e] (v1) -- (v2) -- (v5) -- (v4) -- (v1) -- (v3) -- (v6);
\draw [e] (v5) -- (v6) -- (v4);
\end{tikzpicture}
\end{center}
for some set $X$. Apply Lemma~\ref{lem:p3} with $T = \{3\} \cup X$ to the induced path $P_3$ on the vertices labelled $234X$, $\ell(U_5)$, $135X$. 
The only case of Lemma~\ref{lem:p3} that is consistent with the condition that $\ell(U_5)$ should differ from $123X$ in more than one element occurs when $\ell(U_5) = 345X$. But then the vertices labelled $126X$ and $345X$ are joined by a path of length two but their labels differ in three elements, which is a contradiction.
\end{enumerate}

\section{Proof of Claim~\ref{claim:minunlab7}~(iii)} 
\label{sec:claim7}
In this section we prove that graph (iii) in Claim~\ref{claim:minunlab7} is unlabellable. It follows that this graph is minimally unlabellable, as claimed, because each of its proper induced subgraphs is labellable by Theorem~\ref{thm:minunlab5} and Claim~\ref{claim:minunlab6} (which was established in Section~\ref{sec:minunlab6}). 

Suppose, for a contradiction, that graph (iii) in Claim~\ref{claim:minunlab7} is labellable.
Then by Lemma~\ref{lem:claw} applied to $U_1, U_2, U_6$, $U_7$, we may assign labels
\begin{center}
\begin{tikzpicture} [scale=0.9]
\node [n, label=below: {$234X$}] (v1) at (0.5,0) {};
\node [n, label=below: {$\ell(U_5)$}] (v2) at (2.5,0) {};
\node [n, label=left: {$123X$}] (v3) at (-0.5,1.5) {};
\node [n, label=right: {$\ell(U_4)$}] (v4) at (3.5,1.5) {};
\node [n, label=above: {$126X$}] (v5) at (0.5,3) {};
\node [n, label=above: {$\ell(U_3)$}] (v6) at (2.5,3) {};
\node [n, label=below: {$135X$}] (v7) at (1.5,1.5) {};
\draw [e] (v1) -- (v2) -- (v4) -- (v6) -- (v5) -- (v3) -- (v1);
\draw[e] (v3) -- (v7) -- (v4);
\draw [e](v2) -- (v6);
\end{tikzpicture}
\end{center}
for some set $X$. 
The label $\ell(U_4)$ differs from $135X$ in exactly one element. This element cannot belong to $X$, because $\ell(U_4)$ must differ from $126X$ in exactly two elements. Therefore $\ell(U_4)$ contains $13X$ or $15X$ or $35X$; the first possibility is excluded because $\ell(U_4)$ must differ from $123X$ in exactly two elements, and for the same reason $\ell(U_4)$ does not contain $2$. So we may take $\ell(U_4)$ to be the union of one element of $\{4,6,7\}$ with either $15X$ or~$35X$. We need to consider only the union with $15X$ because the mapping that interchanges $1$ with $3$, and $4$ with $6$, maps the partially labelled graph to (a reflection through a horizontal axis of) itself. 
This leaves $\ell(U_4)$ as one of $145X$, $156X$, $157X$. 
The only one of these possibilities that is consistent with the condition that $\ell(U_4)$ should differ from $234X$ in exactly two elements is $\ell(U_4) = 145X$.

Now apply Lemma~\ref{lem:p3} with $T=\{1\} \cup X$ to the induced path $P_3$ on the vertices labelled $126X$, $\ell(U_3)$, $145X$. The only case of Lemma~\ref{lem:p3} that is consistent with the condition that $\ell(U_3)$ should differ from both $123X$ and $135X$ in more than one element is $\ell(U_3) = 146X$.
Then apply Lemma~\ref{lem:p3} with $T=\{4\} \cup X$ to the induced path $P_3$ on the vertices labelled $234X$, $\ell(U_5)$, $145X$. The only case of Lemma~\ref{lem:p3} that is consistent with the condition that $\ell(U_5)$ should differ from both $123X$ and $135X$ in more than one element is $\ell(U_5) = 245X$. But then $\ell(U_3)$ and $\ell(U_5)$ differ in more than one element, which is a contradiction.

\section{Conclusion} 
\label{sec:conc}

We have extended the definition of the $\gamma$-graph $\gamma\cdot G$ from distance\-/1\-/domination to distance\-/$d$\-/domination, and have shown in Corollary~\ref{cor:realisegammaddotg} that the existence of such a generalised $\gamma$-graph $H$ depends only on whether $H$ is labellable.

We have completely determined the wheel graphs and fan graphs that are labellable.
We have verified for graphs on at most five vertices, and established for graphs on six vertices, precisely which graphs are minimally unlabellable. We have also given an explicit labelling of all connected labellable graphs on at most six vertices. A similar classification procedure could in principle be applied to the $853$ connected graphs on seven vertices, and even the $11117$ connected graphs on eight vertices~\cite{oeisconnected}, although the procedure should be automated as much as possible to avoid errors.

We have exhibited an infinite family of minimally unlabellable graphs in Theorem~\ref{thm:wheel}~(ii).
One might hope to uncover further such families by examining the minimally unlabellable graphs on at most six (and, in future, seven or eight) vertices.
At first sight, the form of graph (i) in Theorem~\ref{thm:minunlab5} and graph (i) in Claim~\ref{claim:minunlab6} suggests such a family, but the next member of this presumed family is in fact labellable (as are all subsequent members):

\begin{center}
\begin{tikzpicture}[scale=0.8]
\node [n, label=below: {$234$}] (v1) at (6,0) {};
\node [n, label=left: {$123$}] (v2) at (2,1.5) {};
\node [n, label=below: {$135$}] (v3) at (4.25,1.5) {};
\node [n, label=below: {$145$}] (v4) at (6,1.5) {};
\node [n, label=below: {$456$}] (v5) at (7.75,1.5) {};
\node [n, label=right: {$246$}] (v6) at (10,1.5) {};
\node [n, label=above: {$126$}] (v7) at (6,3) {};
\draw [e] (v2) -- (v3) -- (v4) -- (v5) -- (v6) -- (v7) -- (v2) -- (v1) -- (v6);
\end{tikzpicture}
\end{center}

\begin{acknowledgements}
We are grateful to Ladislav Stacho for his helpful suggestions for improving this paper.
We thank the anonymous referee for constructive comments, especially those simplifying the proof of Theorem~\ref{thm:realisegammaddotg}.
\end{acknowledgements}

\section*{Conflict of interest}
The authors declare that they have no conflict of interest.

%\bibliographystyle{spmpsci}      % mathematics and physical sciences
%\bibliography{gammagraphs}

%\newpage
\section*{Appendix A: Classification of labellable graphs on at most five vertices}

In this appendix we classify the 31 connected graphs on at most five vertices~\cite{5vgraphs} as comprising 27 which are labellable, and 4 which are minimally unlabellable by the results of Section~\ref{sec:minunlab5}.
The letter labelling of the four minimally unlabellable graphs follows that shown in Theorem~\ref{thm:minunlab5}. Graphs with the same number of vertices are arranged (from top to bottom within each column) in increasing order of the number of edges.

\begin{scriptsize}
\begin{multicols}{3}

%\subsection*{1 vertex} 

%1
\begin{center}
\begin{tikzpicture}[scale=1.0]
\node [n, label=above: {$1$}] (v1) at (2,2) {};
\end{tikzpicture}
\end{center}

%\subsection*{2 vertices} 

%2
\begin{center}
\begin{tikzpicture}[scale=1.0]
\node [n, label=above: {$1$}] (v1) at (0,0) {};
\node [n, label=above: {$2$}] (v2) at (1.5,0) {};
\draw [e] (v1) -- (v2);
\end{tikzpicture}
\end{center}

%\subsection*{3 vertices} 

%3
\begin{center}
\begin{tikzpicture}[scale=0.9]
\node [n, label=above: {$12$}] (v1) at (0,2) {};
\node [n, label=above: {$23$}] (v2) at (1.5,2) {};
\node [n, label=above: {$34$}] (v3) at (3,2) {};
\draw [e] (v1) -- (v2) -- (v3);
\end{tikzpicture}
\end{center}

%4
\begin{center}
\begin{tikzpicture}[scale=0.65]
\node [n, label=above: {$1$}] (v1) at (0,0) {};
\node [n, label=left: {$2$}] (v2) at (-1.5,-1.5) {};
\node [n, label=right: {$3$}] (v3) at (1.5,-1.5) {};
\draw [e] (v1) -- (v2) -- (v3) -- (v1);
\end{tikzpicture}
\end{center}

%\subsection*{4 vertices} 

%5
\begin{center}
\begin{tikzpicture}[scale=0.65]
\node [n, label=above: {$12$}] (v1) at (0,0) {};
\node [n, label=above: {$23$}] (v2) at (1.5,0) {};
\node [n, label=above: {$34$}] (v3) at (3,0) {};
\node [n, label=above: {$45$}] (v4) at (4.5,0) {};
\draw [e] (v1) -- (v2) -- (v3) -- (v4);
\end{tikzpicture}
\end{center}

%6
\begin{center}
\begin{tikzpicture}[scale=0.9]
\node [n, label=above: {$123$}] (v1) at (0,0) {};
\node [n, label=below: {$234$}] (v2) at (-1.5,-1.5) {};
\node [n, label=below: {$135$}] (v3) at (0,-1.5) {};
\node [n, label=below: {$126$}] (v4) at (1.5,-1.5) {};
\draw [e] (v2) -- (v1) -- (v3);
\draw [e] (v1) -- (v4);
\end{tikzpicture}
\end{center}

%7
\begin{center}
\begin{tikzpicture}[scale=1.0]
\node [n, label=left: {$12$}] (v1) at (0,1.5) {};
\node [n, label=right: {$13$}] (v2) at (1.5,1.5) {};
\node [n, label=right: {$34$}] (v3) at (1.5,0) {};
\node [n, label=left: {$24$}] (v4) at (0,0) {};
\draw [e] (v1) -- (v2) -- (v3) -- (v4) -- (v1);
\end{tikzpicture}
\end{center}

%8
\begin{center}
\begin{tikzpicture}[scale=0.69]
\node [n, label=left: {$12$}] (v1) at (0,0) {};
\node [n, label=left: {$13$}] (v2) at (-1.5,-1.5) {};
\node [n, label=right: {$14$}] (v3) at (1.5,-1.5) {};
\node [n, label=left: {$25$}] (v4) at (0,1) {};
\draw [e] (v1) -- (v2) -- (v3) -- (v1) -- (v4);
\end{tikzpicture}
\end{center}

%9
\begin{center}
\begin{tikzpicture}[scale=1.0]
\node [n, label=left: {$12$}] (v1) at (0,1.5) {};
\node [n, label=right: {$23$}] (v2) at (1.5,1.5) {};
\node [n, label=right: {$34$}] (v3) at (1.5,0) {};
\node [n, label=left: {$13$}] (v4) at (0,0) {};
\draw [e] (v1) -- (v2) -- (v3) -- (v4) -- (v1);
\draw [e] (v2) -- (v4);
\end{tikzpicture}
\end{center}

%10
\begin{center}
\begin{tikzpicture}[scale=0.8]
\node [n, label=below: {$1$}] (v1) at (0,-0.3) {};
\node [n, label=left: {$2$}] (v2) at (-1.5,-1.5) {};
\node [n, label=right: {$3$}] (v3) at (1.5,-1.5) {};
\node [n, label=above: {$4$}] (v4) at (0,1.5) {};
\draw [e] (v1) -- (v2) -- (v3) -- (v1) -- (v4) -- (v2);
\draw [e] (v4) -- (v3);
\end{tikzpicture}
\end{center}

%\subsection*{5 vertices} 

%11
\begin{center}
\begin{tikzpicture}[scale=0.6]
\node [n, label=above: {$1234$}] (v1) at (0,1.5) {};
\node [n, label=below: {$2345$}] (v2) at (-2.25,0) {};
\node [n, label=below: {$1346$}] (v3) at (-0.75,0) {};-
\node [n, label=below: {$1247$}] (v4) at (0.75,0) {};
\node [n, label=below: {$1238$}] (v5) at (2.25,0) {};
\draw [e] (v2) -- (v1) -- (v3);
\draw [e] (v4) -- (v1) -- (v5);
\end{tikzpicture}
\end{center}

%12
\begin{center}
\begin{tikzpicture}[scale=0.8]
\node [n, label=above: {$123$}] (v1) at (0,1.5) {};
\node [n, label=above: {$135$}] (v2) at (1.5,1.5) {};
\node [n, label=above: {$126$}] (v3) at (-1.5,1.5) {};
\node [n, label=below: {$234$}] (v4) at (0,0) {};
\node [n, label=below: {$247$}] (v5) at (-1.5,0) {};
\draw [e] (v2) -- (v1) -- (v3);
\draw [e] (v1) -- (v4) -- (v5);
\end{tikzpicture}
\end{center}

%13
\begin{center}
\begin{tikzpicture}[scale=0.47]
\node [n, label=above: {$12$}] (v1) at (0,0) {};
\node [n, label=above: {$23$}] (v2) at (1.5,0) {};
\node [n, label=above: {$34$}] (v3) at (3,0) {};
\node [n, label=above: {$45$}] (v4) at (4.5,0) {};
\node [n, label=above: {$56$}] (v5) at (6,0) {};
\draw [e] (v1) -- (v2) -- (v3) -- (v4) -- (v5);
\end{tikzpicture}
\vspace{1em}
\end{center}

%14
\begin{center}
\begin{tikzpicture}[scale=0.9]
\node [n, label=left: {$125$}] (v1) at (0,1.5) {};
\node [n, label=right: {$135$}] (v2) at (1.5,1.5) {};
\node [n, label=right: {$345$}] (v3) at (1.5,0) {};
\node [n, label=left: {$245$}] (v4) at (0,0) {};
\node [n, label=left: {$246$}] (v5) at (0,-1.5) {};
\draw [e] (v1) -- (v2) -- (v3) -- (v4) -- (v1);
\draw [e] (v4) -- (v5);
\end{tikzpicture}
\end{center}

%15
\begin{center}
\begin{tikzpicture}[scale=0.9]
\node [n, label=left: {$13$}] (v1) at (0,0) {};
\node [n, label=right: {$14$}] (v2) at (1.5,0) {};
\node [n, label=below: {$12$}] (v3) at (0.75,-1) {};
\node [n, label=left: {$35$}] (v4) at (0,1.5) {};
\node [n, label=right: {$46 $}] (v5) at (1.5,1.5) {};
\draw [e] (v1) -- (v2) -- (v3) -- (v1) -- (v4);
\draw [e] (v2) -- (v5);
\end{tikzpicture}
\end{center}

%16
\begin{center}
\begin{tikzpicture}[scale=0.9]
\node [n, label=left: {$136$}] (v1) at (0,0) {};
\node [n, label=right: {$146$}] (v2) at (1.5,0) {};
\node [n, label=below: {$126$}] (v3) at (0.75,-1) {};
\node [n, label=below: {$256$}] (v4) at (-0.75,-1) {};
\node [n, label=below: {$127$}] (v5) at (2.25,-1) {};
\draw [e] (v1) -- (v2) -- (v3) -- (v1);
\draw [e] (v4) -- (v3) -- (v5);
\end{tikzpicture}
\end{center}

%17
\begin{center}
\begin{tikzpicture}[scale=1.0]
\node [n, label=left: {$15$}] (v1) at (-1,0.1) {};
\node [n, label=right: {$23$}] (v2) at (1,0.1) {};
\node [n, label=above: {$12$}] (v3) at (0,1) {};
\node [n, label=below: {$45$}] (v4) at (-0.6,-1) {};
\node [n, label=below: {$34$}] (v5) at (0.6,-1) {};
\draw [e] (v2) -- (v3) -- (v1) --(v4) -- (v5) -- (v2);
\end{tikzpicture}
\end{center}

%18
\begin{center}
\begin{tikzpicture}[scale=0.69]
\node [n, label=left: {$13$}] (v1) at (-0.5,0) {};
\node [n, label=right: {$14$}] (v2) at (2.5,0) {};
\node [n, label=left: {$12$}] (v3) at (1,1.5) {};
\node [n, label=left: {$25$}] (v4) at (1,2.5) {};
\node [n, label=left: {$56$}] (v5) at (1,3.5) {};
\draw [e] (v3) -- (v2) -- (v1) -- (v3) -- (v4) -- (v5);
\end{tikzpicture}
\end{center}

%19
\begin{center}
\begin{tikzpicture}[scale=0.9]
\node [n, label=left: {$12$}] (v1) at (0,0) {};
\node [n, label=right: {$13$}] (v2) at (2,0) {};
\node [n, label=above: {$15$}] (v3) at (1,1) {};
\node [n, label=left: {$24$}] (v4) at (0,-1.5) {};
\node [n, label=right: {$34$}] (v5) at (2,-1.5) {};
\draw [e] (v1) -- (v2) -- (v3) -- (v1) --(v4) -- (v5) -- (v2);
\end{tikzpicture}
\end{center}

%20
\begin{center}
\begin{tikzpicture}[scale=0.85]
\node [n, label=left: {$125$}] (v1) at (-1,1) {};
\node [n, label=left: {$235$}] (v2) at (0,2) {};
\node [n, label=right: {$345$}] (v3) at (1,1) {};
\node [n, label=below: {$135$}] (v4) at (0,0) {};
\node [n, label=left: {$236$}] (v5) at (0,3) {};
\draw [e] (v1) -- (v2) -- (v3) -- (v4) -- (v1);
\draw [e] (v5) -- (v2) -- (v4);
\end{tikzpicture}
\end{center}

%21
\begin{center}
\begin{tikzpicture}[scale=1.0]
\node [n, label=left: {$34$}] (v1) at (0,2) {};
\node [n, label=right: {$13$}] (v2) at (1,1) {};
\node [n, label=below: {$12$}] (v3) at (0,0) {};
\node [n, label=left: {$23$}] (v4) at (-1,1) {};
\node [n, label=left: {$45$}] (v5) at (0,3) {};
\draw [e] (v5) -- (v1) -- (v2) -- (v3) -- (v4) -- (v1);
\draw [e] (v2) -- (v4);
\end{tikzpicture}
\end{center}

%22
\begin{center}
\begin{tikzpicture}[scale=0.8]
\node [n, label=left: {$14$}] (v1) at (0,2) {};
\node [n, label=right: {$25$}] (v2) at (2,2) {};
\node [n, label=right: {$26$}] (v3) at (2,0) {};
\node [n, label=left: {$13$}] (v4) at (0,0) {};
\node [n, label=above: {$12$}] (v5) at (1,1) {};
\draw [e] (v4) -- (v1) -- (v5) -- (v3) -- (v2) -- (v5) -- (v4);
\end{tikzpicture}
\end{center}

%23
\begin{center}
\begin{tikzpicture}[scale=0.67]
\node [n, label=below: {$U_5$}] (v1) at (0,0) {};
\node [n, label=left: {$U_1$}] (v2) at (-1.5,1.5) {};
\node [n, label=below: {$U_4$}] (v3) at (0,1.5) {};
\node [n, label=right: {$U_2$}] (v4) at (1.5,1.5) {};
\node [n, label=above: {$U_3$}] (v5) at (0,3) {};
\draw [e] (v1) -- (v2) -- (v5) -- (v4) -- (v1);
\draw [e] (v2) -- (v3) -- (v4);
\end{tikzpicture} 
\end{center}

%24
\begin{center}
\begin{tikzpicture}[scale=0.65]
\node [n, label=above: {$123$}] (v1) at (0,1.5) {};
\node [n, label=below: {$135$}] (v2) at (-2.25,0) {};
\node [n, label=below: {$125$}] (v3) at (-0.75,0) {};
\node [n, label=below: {$124$}] (v4) at (0.75,0) {};
\node [n, label=below: {$234$}] (v5) at (2.25,0) {};
\draw [e] (v2) -- (v1) -- (v3);
\draw [e] (v4) -- (v1) -- (v5) -- (v4) -- (v3) -- (v2);
\end{tikzpicture}
\end{center}
\vspace{1em}

%25
\begin{center}
\begin{tikzpicture}[scale=1.0]
\node [n, label=left: {$12$}] (v1) at (0,2) {};
\node [n, label=right: {$13$}] (v2) at (1,1) {};
\node [n, label=below: {$14$}] (v3) at (0,0) {};
\node [n, label=left: {$15$}] (v4) at (-1,1) {};
\node [n, label=left: {$26$}] (v5) at (0,3) {};
\draw [e] (v5) -- (v1) -- (v2) -- (v3) -- (v4) -- (v1) -- (v3);
\draw [e] (v2) -- (v4);
\end{tikzpicture}
\end{center}

%26
\begin{center}
\begin{tikzpicture}[scale=0.7]
\node [n, label=below: {$V_2$}] (v1) at (0,0) {};
\node [n, label=left: {$V_3$}] (v2) at (-1,1) {};
\node [n, label=right: {$V_4$}] (v3) at (1,1) {};
\node [n, label=above: {$V_1$}] (v4) at (0,2) {};
\node [n, label=right: {$V_5$}] (v5) at (2.5,1) {};
\draw [e] (v1) -- (v2) -- (v4) -- (v3) -- (v1) -- (v4) -- (v5) -- (v1);
\end{tikzpicture}
\end{center}

%27
\begin{center}
\begin{tikzpicture}[scale=0.7]
\node [n, label=below: {$Y_2$}] (v1) at (0,0) {};
\node [n, label=left: {$Y_3$}] (v2) at (-1.5,1.5) {};
\node [n, label=right: {$Y_4$}] (v3) at (0,1.5) {};
\node [n, label=right: {$Y_5$}] (v4) at (1.5,1.5) {};
\node [n, label=above: {$Y_1$}] (v5) at (0,3) {};
\draw [e] (v1) -- (v3) -- (v5) -- (v4) -- (v1) -- (v2) -- (v5);
\draw [e] (v2) -- (v3);
\end{tikzpicture} 
\end{center}

%28
\begin{center}
\begin{tikzpicture}[scale=0.8]
\node [n, label=left: {$12$}] (v1) at (0,2) {};
\node [n, label=right: {$13$}] (v2) at (2,2) {};
\node [n, label=right: {$34$}] (v3) at (2,0) {};
\node [n, label=left: {$24$}] (v4) at (0,0) {};
\node [n, label=above: {$23$}] (v5) at (1,1) {};
\draw [e] (v2) -- (v5) -- (v4);
\draw [e] (v3) -- (v5) -- (v1) -- (v2) -- (v3) -- (v4) -- (v1);
\end{tikzpicture}
\end{center}

%29
\begin{center}
\begin{tikzpicture}[scale=0.9]
\node [n, label=left: {$12$}] (v1) at (-0.5,0) {};
\node [n, label=right: {$13$}] (v2) at (1.5,0) {};
\node [n, label=above: {$23$}] (v3) at (0.5,1) {};
\node [n, label=left: {$15$}] (v4) at (-0.5,-1.5) {};
\node [n, label=right: {$14$}] (v5) at (1.5,-1.5) {};
\draw [e] (v4) -- (v1) -- (v3) -- (v2) -- (v4) -- (v5) -- (v1) -- (v2) -- (v5);
\end{tikzpicture}
\end{center}

%30
\begin{center}
\begin{tikzpicture}[scale=1.0]
\node [n, label=left: {$Z_2$}] (v1) at (-1,0.1) {};
\node [n, label=right: {$Z_4$}] (v2) at (1,0.1) {};
\node [n, label=above: {$Z_1$}] (v3) at (0,1) {};
\node [n, label=below: {$Z_3$}] (v4) at (-0.6,-1) {};
\node [n, label=below: {$Z_5$}] (v5) at (0.6,-1) {};
\draw [e] (v5) -- (v2) -- (v4) -- (v3) -- (v5) -- (v1) -- (v2) -- (v3) -- (v1) --(v4);
\end{tikzpicture}
\end{center}

%31
\begin{center}
\begin{tikzpicture}[scale=1.0]
\node [n, label=left: {$5$}] (v1) at (-1,0.1) {};
\node [n, label=right: {$2$}] (v2) at (1,0.1) {};
\node [n, label=above: {$1$}] (v3) at (0,1) {};
\node [n, label=below: {$4$}] (v4) at (-0.6,-1) {};
\node [n, label=below: {$3$}] (v5) at (0.6,-1) {};
\draw [e] (v2) -- (v3) -- (v1) --(v4) -- (v5) -- (v2) -- (v4) -- (v3) -- (v5) -- (v1) -- (v2);
\end{tikzpicture}
\end{center}

\end{multicols}
\end{scriptsize}

%\newpage
\section*{Appendix B: Classification of labellable graphs on six vertices}

In this appendix we classify the 112 connected graphs on six vertices~\cite{cvetpet} as comprising:
69 which are labellable, as demonstrated;
39 which are unlabellable because they contain as a proper induced subgraph one of the four minimally unlabellable graphs on five vertices (indicated using $U_i, V_i, Y_i$, or $Z_i$ as shown in Theorem~\ref{thm:minunlab5});
and 4 which are minimally unlabellable by the results of Section~\ref{sec:minunlab6} (left unlabelled).
The graphs are arranged (from top to bottom within each column) in increasing order of the number of edges.

\begin{scriptsize}
\begin{multicols}{3}

%1
\begin{center}
% [inline block 0: 112 envs, 56612 chars -> data_tex | \begin{tikzpicture}[scale=0.38] \node [n, label=above: {$12$}] (v1) at (0,2) {};...]

\end{center}

\end{multicols}
\end{scriptsize}

\end{document}